\theoremstyle{plain}
\newtheorem{theorem}{Theorem}[section]
\newtheorem{lemma}[theorem]{Lemma}
\newtheorem{proposition}[theorem]{Proposition}
\newtheorem{corollary}[theorem]{Corollary}
\theoremstyle{remark}
\theoremstyle{remark}
\newtheorem{remark}[theorem]{Remark}
\theoremstyle{definition}
\newtheorem{definition}[theorem]{Definition}
\numberwithin{equation}{section}
\def\|{\parallel}
\def\B{\mathcal{B}_1}
\def\S{\mathcal{S}^1}
\def\R{\mathbf{R}}
\def\H{\mathcal{H}}
\def\E{\mathcal{E}}
\begin{document}
\title[Alternating and Variable Controls]
{Variable Support Control for the Wave Equation: A Multiplier Approach}

\author{Antonio Agresti}
\author{Daniele Andreucci}
\author{Paola Loreti}

\address{Dipartimento di Scienze di Base e Applicate per l'Ingegneria\\
Sapienza Universit\`a di Roma\\
via A.Scarpa 16, 00161 Roma, Italy}
\curraddr{Dipartimento di Matematica Guido Castelnuovo\\
Sapienza Universit\`a di Roma\\
P.le A. Moro 2, 00100 Roma, Italy}
\address{Dipartimento di Scienze di Base e Applicate per l'Ingegneria\\
Sapienza Universit\`a di Roma\\
via A.Scarpa 16, 00161 Roma, Italy}
\address{Dipartimento di Scienze di Base e Applicate per l'Ingegneria\\
Sapienza Universit\`a di Roma\\
via A.Scarpa 16, 00161 Roma, Italy}
\thanks{
The second author is member of Italian
G.N.F.M.-I.N.d.A.M.}

\begin{abstract}
We study the controllability of the multidimensional wave equation in a bounded domain with Dirichlet boundary condition, in which the support of the control is allowed to change over time.\\
The exact controllability is reduced to the proof of the observability inequality, which is proven by a multiplier method.
Besides our main results, we present some applications.
\end{abstract}

\maketitle

\section{INTRODUCTION}
\label{Introduction}
The controllability of the wave equation or more
generally  of  partial  differential  equations  has  been
studied intensively in the last 30 years. Exact controllability for evolutive systems is  a  challenging mathematical problem, also relevant in engineering applications.
The  exact  controllability  of  the  wave  equation  with Dirichlet boundary condition using the multiplier method is studied in
\cite{Lions} see also \cite{multiplier} for a systematic study of this method and, for an approach using  theory of semi-groups see \cite{Tucsnak}. All these results do not  allow  the  support  of  the  control  to  change  over time,  while  this  variability  is  required  in  some  applications (see \cite{Carcaterra2}).\\
In this paper we provide a controllability result which extends the classical controllability results and admits variability of the control support over time. We have to point out that even in the fixed support case, the subset of the boundary, on which the control acts, cannot be chosen arbitrarily; for an extensive discussion on these topics see \cite{Labeau-Rauch}.\\
The common strategy to prove the exact controllability is to study an equivalent property i.e. the exact observability for the adjoint system (for more on this see \cite{Fourier}, \cite{Tucsnak}). Our approach is based on the multiplier method (see \cite{multiplier}, \cite{Lions}), which seems the most powerful in the multidimensional case. Although the strategy of the proof is quite classical and follows essentially \cite{Lions}, this approach leads to some unexpected results and it opens some questions on the optimality of these results; see \cite{Preprint} for more on this.\\
Here we dwell more on the multidimensional case which seems to be closer to applications, providing some explicit examples in special geometries.\\ 
Let us begin with some notations:
\begin{itemize}
\item Let $\Omega$ be a bounded domain of $\mathbf{R}^d$ with $d\geq 1$, of class $C^2$ \textit{or} convex. From the hypothesis on the boundary, we know that the exterior normal vector $\nu$ is well defined $\mathcal{H}^{d-1}${-a.e.} on $\partial \Omega$; where $\mathcal{H}^{d-1}$ is the $d-1$ dimensional Hausdorff measure (see \cite{Evans}). Moreover, we denote $d\Gamma$ the measure $\mathcal{H}^{d-1}$ restricted to $\partial \Omega$.
\item For each $t\in(0,T)$ and $T>0$, we write $\Gamma(t)$ for an open subset of $\partial \Omega$.
\item Lastly, we define
\begin{equation}
\label{Sigma}
\Sigma := \bigcup_{t\in(0,T)}\Gamma(t)\times \{t\}\,;
\end{equation}
and we suppose it to be $\mathcal{H}^{d-1}\otimes \mathcal{L}^1$-measurable (as defined in \cite{Evans} Chapter 1).
\end{itemize}
In this paper we want to study the following property.
\begin{definition}[Exact controllability]
\label{exact controllability def}
We say that the system 
\begin{equation}
\begin{cases}
\label{control1}
w_{tt}- \Delta w =0\,,& (x,t)\in \Omega \times (0,T)\,,\\
w=0\,, &  (x,t)\in\partial \Omega\times(0,T) \setminus {\Sigma}\,,\\
w=v\,, & (x,t)\in\Sigma\,,\\
w=w_0\,, & (x,t) \in \Omega\times \{0\}\,,\\
w_t=w_1\,, &  (x,t) \in \Omega\times \{0\}\,;
\end{cases}
\end{equation}
is exactly controllable in time $T>0$, if for all $w_0,z_0\in L^2(\Omega)$ and $w_1,z_1\in H^{-1}(\Omega)$ there exists a control $v\in L^2\left( \Sigma\right)$ such that the unique solution $w\in C([0,T];L^2(\Omega))\cap C^1([0,T];H^{-1}(\Omega))$ of (\ref{control1}) satisfies
\begin{equation}
\label{eq:cond final state}
w(x,T)=z_0\,, \qquad w_t(x,T)=z_1\,.
\end{equation}
\end{definition}
Of course, the problem (\ref{control1}) has to be intended in a weak sense, which we give below (see Definition \ref{weak def}).\\ 
\\
In the following, it will be useful to know some properties of the solution of the wave equation with null Dirichlet boundary condition, i.e.
\begin{equation}
\label{omo}
\begin{cases}
u_{tt}- \Delta u =0\,,& (x,t)\in \Omega \times \mathbf{R}^+\,,\\
u=0\,, &  (x,t)\in\partial \Omega\times\mathbf{R}^+\,,\\
u=u_0\,, & (x,t) \in \Omega\times \{0\}\,,\\
u_t=u_1\,, &  (x,t) \in \Omega\times \{0\}\,;
\end{cases}
\end{equation} 
where $\mathbf{R}^+:=[0,\infty)$. In particular, we will need the following proposition; see \cite{Pazy} for the notion of classical and mild solution.
\begin{proposition}
\label{prop:well posedness omo}
The following holds true:
\begin{itemize}
\item[i)] For each $(u_0,u_1)\in H^1_0(\Omega)\times L^2(\Omega)$, then the problem (\ref{omo}) has an unique \textbf{mild} solution $u$ in the class
\begin{equation*}
C^1(\mathbf{R}^+;H^1_0(\Omega))\cap C(\mathbf{R}^+;L^2(\Omega))\,.
\end{equation*}
\item [ii)] For each $(u_0,u_1)\in (H^2(\Omega)\cap H^1_0(\Omega))\times H^1_0(\Omega)$, then the problem (\ref{omo}) has an unique \textbf{classical} solution $u$ in the class
\begin{equation*}
C(\mathbf{R}^+;H^2(\Omega)\cap H^1_0(\Omega))\cap C^2(\mathbf{R}^+;L^2(\Omega))\,.
\end{equation*}
\end{itemize}
\end{proposition}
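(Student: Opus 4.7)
The plan is to recast (\ref{omo}) as an abstract first-order Cauchy problem on a Hilbert space and then invoke the standard semigroup theory from \cite{Pazy}. Let $\mathcal{X}:=H^1_0(\Omega)\times L^2(\Omega)$, endowed with the energy inner product
\[
\langle(u,v),(\tilde u,\tilde v)\rangle_{\mathcal{X}}:=\int_\Omega\nabla u\cdot\nabla\tilde u\,dx+\int_\Omega v\,\tilde v\,dx,
\]
which, by Poincar\'e's inequality, is equivalent to the product Hilbert structure. Define the unbounded operator
\[
\mathcal{A}(u,v):=(v,\Delta u),\qquad D(\mathcal{A}):=(H^2(\Omega)\cap H^1_0(\Omega))\times H^1_0(\Omega),
\]
so that, setting $U:=(u,u_t)$, problem (\ref{omo}) becomes $U'=\mathcal{A}U$ with $U(0)=(u_0,u_1)$.

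Next I would check that $\mathcal{A}$ generates a strongly continuous group of isometries on $\mathcal{X}$. Integration by parts gives
\[
\langle\mathcal{A}(u,v),(u,v)\rangle_{\mathcal{X}}=\int_\Omega\nabla v\cdot\nabla u\,dx+\int_\Omega\Delta u\,v\,dx=0
\]
for every $(u,v)\in D(\mathcal{A})$, so $\mathcal{A}$ is skew-symmetric. By the Lumer--Phillips/Stone criterion it then suffices to show that $\mathrm{Range}(I\pm\mathcal{A})=\mathcal{X}$: given $(f,g)\in\mathcal{X}$, eliminating $v$ from the two scalar equations reduces the system to $u-\Delta u=h$ with $h\in L^2(\Omega)$ built algebraically from $f,g$, and this admits a unique solution $u\in H^2(\Omega)\cap H^1_0(\Omega)$ by the standard elliptic $H^2$-regularity theorem; the second component $v\in H^1_0(\Omega)$ is recovered from the first equation.

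Once $\mathcal{A}$ is identified as the infinitesimal generator of a $C_0$-group $\{\mathcal{T}(t)\}_{t\in\mathbf{R}}$ on $\mathcal{X}$, both parts of the proposition follow at once from the general theory in \cite{Pazy}: for every $U_0\in\mathcal{X}$ the orbit $U(t):=\mathcal{T}(t)U_0$ is the unique mild solution with $U\in C(\mathbf{R}^+;\mathcal{X})$, while for $U_0\in D(\mathcal{A})$ it is the unique classical solution with $U\in C(\mathbf{R}^+;D(\mathcal{A}))\cap C^1(\mathbf{R}^+;\mathcal{X})$. Reading off the two components of $U$ gives the regularity classes stated in (i) and (ii), and uniqueness in each of these classes is inherited from the abstract Cauchy problem.

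The main obstacle is the elliptic regularity step inside the surjectivity argument: identifying $D(\mathcal{A})$ with $(H^2(\Omega)\cap H^1_0(\Omega))\times H^1_0(\Omega)$ requires that every $u\in H^1_0(\Omega)$ solving $-\Delta u+u=h$ with $h\in L^2(\Omega)$ already belongs to $H^2(\Omega)$. This is precisely where the running assumption "$\Omega$ of class $C^2$ \emph{or} convex" is used, via the standard $H^2$-regularity result from \cite{Evans} in the smooth case and via Grisvard's theorem in the convex case; all other verifications are routine.
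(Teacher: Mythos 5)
Your argument is correct, but it is a genuinely different route from the paper's. The paper proves the proposition by expanding the initial data in the eigenfunctions $\{f_k\}$ of the Dirichlet Laplacian and writing the solution explicitly as the series \eqref{eq:solution}; the regularity claims in i) and ii) are then read off directly from the decay of the Fourier coefficients, with no semigroup machinery. You instead set up the first-order system on the energy space $H^1_0(\Omega)\times L^2(\Omega)$, verify skew-symmetry plus the range condition (Lumer--Phillips/Stone), and quote the abstract mild/classical solution theory of \cite{Pazy}; the surjectivity step is where the standing hypothesis ``$\Omega$ of class $C^2$ or convex'' enters through $H^2$ elliptic regularity (\cite{Evans}, \cite{Grisvard}), and you correctly flag this. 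Note that the paper's spectral proof is not free of that ingredient either: for part ii) one must still identify the operator domain with $(H^2(\Omega)\cap H^1_0(\Omega))\times H^1_0(\Omega)$, which is the same elliptic-regularity fact, so the two proofs rest on the same foundation, the paper's being more explicit and self-contained, yours more abstract but immediately giving the group of isometries (hence also energy conservation, Corollary \ref{energy cor}, as a by-product). One small point to tidy up: what your construction (and equally the paper's series) actually yields in part i) is $u\in C(\mathbf{R}^+;H^1_0(\Omega))\cap C^1(\mathbf{R}^+;L^2(\Omega))$, which is the class used later in Lemma \ref{estimate}; the class printed in the statement of the proposition has the roles of the two spaces interchanged, so your phrase ``reading off the two components gives the regularity classes stated'' should be adjusted accordingly rather than taken literally.
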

\begin{proof}
Let $\{f_k\,:\,k\in \mathbf{N}\}$ and $\{\lambda_k\,:\,k\in  \mathbf{N}\}$ be respectively the eigenfunctions and the eigenvalues of the Laplace operator with null Dirichlet boundary condition. Then the solution $u$ of the problem (\ref{omo}) is
\begin{equation}
\label{eq:solution}
u(t)=
\sum_{k=0}^{\infty} \left( \cos(\lambda_k t)\hat{u}_{0,k} 
+ \frac{\sin(\lambda_j t)}{\lambda_k}  \hat{u}_{1,k}\right)  f_k\,;
\end{equation}
where
\begin{equation*}
\hat{u}_{i.k} := (u_i,f_k)_{L^2(\Omega)}:=\int_{\Omega} u_i\,f_k\, dx\,, \qquad i=0,1\,.
\end{equation*}
Then the representation of the solution in (\ref{eq:solution}) readily implies the claim in $i)-ii)$.
\end{proof}
With this in hand, we can give a definition of solution for the problem (\ref{control1}).
\begin{definition}[Weak solution of (\ref{control1})]
\label{weak def}
A map $w\in C([0,T];L^2(\Omega))\cap C^1([0,T];H^{-1}(\Omega))$ is a weak solution of (\ref{control1}) with $v\in L^2\left( \Sigma \right)$ and $(w_0,w_1)\in L^2(\Omega)\times H^{-1}(\Omega)$ if and only if
\begin{multline}
\label{weak formulation}
 - \langle   w_t(s),u(s)\rangle_{H^{-1}\times H^1_0} +
 \langle w(s),  u_t(s)\rangle_{L^2\times L^2}=\\
 - \langle   w_1,u_0\rangle_{H^{-1}\times H^1_0} +
 \langle w_0,  u_1\rangle_{L^2\times L^2} +\\
 \int_{0}^s \int_{\Gamma(t)} v\,  \partial_{\nu} u \,\,d\Gamma\,dt\,,
\end{multline}
for all $0<s\leq T$ and for all mild solutions $u$ of (\ref{omo}) and initial data $(u_0,u_1)\in H^1_0(\Omega)\times L^2(\Omega)$.
\end{definition}
For the motivation see \cite{Fourier}; for other equivalent formulations one may consult \cite{Lions}. 
\begin{remark}
\label{hidden}
Note that, on the RHS of (\ref{weak formulation}) it appears $\partial_{\nu} u$ i.e. the normal derivative of $u$. Although the regularity of a mild solution $u$ is not sufficient to have a well defined trace for $\partial_{\nu} u$, in the sense of Sobolev spaces (see \cite{Grisvard}), the normal derivative $\partial_{\nu} u$ is a well defined element of $L^2(\partial \Omega \times (0,t))$ for each $t>0$: this result is generally called \textit{hidden regularity}; the proof relies on a standard density argument, for details see \cite{multiplier}, \cite{Lions} or \cite{Hidden}. By this, the last term in the RHS of (\ref{weak formulation}) is well defined since $v\in L^2\left( \Sigma \right)$. 
\end{remark}
Before proceeding in the analysis of the weak solutions, we recall the following well known result about the solution of the wave equation (\ref{omo}).
\begin{corollary}[Energy conservation]
\label{energy cor}
For each $t\in\mathbf{R}^+$ and each mild solution $u$ of (\ref{omo}) with initial data $(u_0,u_1)\in H^1_0(\Omega)\times L^2(\Omega)$, the energy
\begin{equation}
\label{energy}
E(t)=\frac{1}{2} \left(\Vert \nabla u(\cdot,t) \Vert_{L^2(\Omega)}^2 + \Vert u_t(\cdot,t) \Vert_{L^2(\Omega)}^2\right)\,,
\end{equation}
is constant, and it is equal to the initial energy 
\begin{equation}
\label{initial energy}
E_0:= \frac{1}{2}\left(\parallel \nabla u_0 \parallel_{L^2(\Omega)}^2 + \parallel u_1 \parallel_{L^2(\Omega)}^2\right)\,.
\end{equation}
\end{corollary}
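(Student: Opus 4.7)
The plan is the classical two-step procedure: first establish the identity for sufficiently regular (classical) solutions via differentiation under the integral sign and integration by parts, then extend to arbitrary mild solutions by a density argument that leans on the spectral representation (\ref{eq:solution}).

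For the classical step, take $(u_0,u_1)\in (H^2(\Omega)\cap H^1_0(\Omega))\times H^1_0(\Omega)$, so that by Proposition \ref{prop:well posedness omo}(ii) the corresponding solution $u$ lies in $C(\mathbf{R}^+;H^2\cap H^1_0)\cap C^2(\mathbf{R}^+;L^2)$. Inspection of (\ref{eq:solution}) also gives $u_t\in C(\mathbf{R}^+;H^1_0(\Omega))$, which guarantees that every integral and trace appearing below is well defined. Differentiating $E(t)$ under the integral sign and applying Green's identity yields
\[
E'(t)=\int_\Omega u_t\,(u_{tt}-\Delta u)\,dx+\int_{\partial\Omega} u_t\,\partial_\nu u\,d\Gamma.
\]
The first integral vanishes by the wave equation; the boundary integral vanishes because $u=0$ on $\partial\Omega$ for every $t$ forces $u_t=0$ on $\partial\Omega$ as well. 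Hence $E(t)\equiv E(0)=E_0$ in the classical case.

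For the density step, given $(u_0,u_1)\in H^1_0(\Omega)\times L^2(\Omega)$, choose approximating sequences $(u_0^n,u_1^n)$ in the smoother class with $u_0^n\to u_0$ in $H^1_0$ and $u_1^n\to u_1$ in $L^2$. The spectral formula (\ref{eq:solution}) and Parseval's identity show that the mild solution map $(u_0,u_1)\mapsto (u(\cdot,t),u_t(\cdot,t))$ is bounded linear from $H^1_0\times L^2$ into itself, with operator norm independent of $t$. Therefore the associated energies $E^n(t)$ converge pointwise to $E(t)$; passing to the limit in $E^n(t)=E^n(0)\to E_0$ finishes the proof.

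I expect no real obstacle: the classical case is a direct calculation, and the density upgrade is immediate once one notes that the energy norm is precisely the one in which (\ref{eq:solution}) provides uniform-in-$t$ continuous dependence, thanks to the identity $\|\nabla f_k\|_{L^2}^2=\lambda_k^2$ combined with the orthonormality of $\{f_k\}$. (A self-contained alternative would be to compute $\|\nabla u(\cdot,t)\|_{L^2}^2+\|u_t(\cdot,t)\|_{L^2}^2$ directly from (\ref{eq:solution}) and observe that the cross terms cancel via $\sin^2+\cos^2=1$, producing the time-independent sum $\sum_k (\lambda_k^2\hat u_{0,k}^2+\hat u_{1,k}^2)=2E_0$; this bypasses the density step entirely.)
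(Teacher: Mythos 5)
Your argument is correct. Note, though, that the paper itself offers no proof of Corollary \ref{energy cor}: it is simply recalled as a classical fact, placed immediately after Proposition \ref{prop:well posedness omo} so that it is implicitly read off from the spectral representation (\ref{eq:solution}). In that sense your parenthetical alternative --- computing $\|\nabla u(\cdot,t)\|_{L^2}^2+\|u_t(\cdot,t)\|_{L^2}^2$ directly from (\ref{eq:solution}), using the $L^2$-orthonormality of $\{f_k\}$ and the orthogonality relations $(\nabla f_j,\nabla f_k)_{L^2}=\lambda_k^2\delta_{jk}$, so that the cross terms cancel by $\sin^2+\cos^2=1$ --- is the one-line argument closest to the paper's implicit reasoning, and it needs neither extra regularity nor a density step (just note that the paper's $\lambda_k$ must be read as the frequencies, i.e.\ the square roots of the Dirichlet eigenvalues, exactly as its formula $\cos(\lambda_k t)$, $\sin(\lambda_k t)/\lambda_k$ already presupposes; this is also what your identity $\|\nabla f_k\|_{L^2}^2=\lambda_k^2$ requires). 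Your main route --- differentiating $E(t)$ for data in $(H^2\cap H^1_0)\times H^1_0$, killing the boundary term via $u_t=0$ on $\partial\Omega$, and then upgrading to mild solutions by density and the uniform-in-$t$ boundedness of the solution map in the energy norm --- is equally valid and mirrors the regularization-plus-density scheme the paper uses for Lemma \ref{multiplier lemma}; its advantage is that it does not rely on an explicit eigenfunction expansion and so adapts to more general operators, at the cost of the extra regularity and limiting arguments that the direct Parseval computation avoids.
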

We conclude the treatment of the weak solution with the following well posedness result, for a proof see \cite{Fourier}.
\begin{theorem}[Well posedness]
\label{t:well posedness}
For each $(w_0,w_1)\in L^2(\Omega)\times H^{-1}(\Omega)$ and $v\in L^2\left( \Sigma \right)$, there exist a unique weak solution of (\ref{control1}) in the sense of Definition \ref{weak def}.
\end{theorem}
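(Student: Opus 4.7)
The plan is to use the transposition method of J.-L.~Lions. The right-hand side of the identity (\ref{weak formulation}) makes sense as a continuous linear functional of the initial data $(u_0,u_1)$ of the adjoint homogeneous problem (\ref{omo}); after a change of variable exploiting the time-reversibility of the wave equation, duality then produces the desired pair $(w(s),w_t(s))\in L^2(\Omega)\times H^{-1}(\Omega)$ for each $s\in(0,T]$.

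Concretely, fix $s\in(0,T]$ and, for $(u_0,u_1)\in H^1_0(\Omega)\times L^2(\Omega)$, let $u$ denote the mild solution of (\ref{omo}) provided by Proposition \ref{prop:well posedness omo}. I would define
\begin{equation*}
L_s(u_0,u_1) := -\langle w_1,u_0\rangle_{H^{-1}\times H^1_0} + \langle w_0,u_1\rangle_{L^2\times L^2} + \int_0^s\!\int_{\Gamma(t)} v\, \partial_\nu u\, d\Gamma\, dt.
\end{equation*}
The first two terms are bounded by $(\|w_0\|_{L^2}+\|w_1\|_{H^{-1}})(\|u_0\|_{H^1_0}+\|u_1\|_{L^2})$; for the boundary term I invoke the hidden-regularity estimate recalled in Remark \ref{hidden}, which yields $\|\partial_\nu u\|_{L^2(\partial\Omega\times(0,T))}\le C_T(\|u_0\|_{H^1_0}^2+\|u_1\|_{L^2}^2)^{1/2}$; combined with $v\in L^2(\Sigma)$ and Cauchy--Schwarz, this places $L_s$ in $(H^1_0(\Omega)\times L^2(\Omega))^*$. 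By Corollary \ref{energy cor} and time-reversibility, the flow map $\Phi_s:(u_0,u_1)\mapsto(u(s),u_t(s))$ is a linear isomorphism of $H^1_0\times L^2$ onto itself, so $F_s:=L_s\circ\Phi_s^{-1}$ is still continuous and linear. The Riesz identification in each factor, $(H^1_0)^*\cong H^{-1}$ and $L^2\cong L^2$, then represents $F_s$ by a unique pair $(w_t(s),w(s))\in H^{-1}(\Omega)\times L^2(\Omega)$; the identity (\ref{weak formulation}) holds by construction.

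Uniqueness is immediate: if $(w_0,w_1,v)\equiv 0$ then $L_s\equiv 0$ for every $s$, hence so does $F_s$, forcing $w(s)=0$ and $w_t(s)=0$ for all $s$. To obtain the temporal regularity $w\in C([0,T];L^2)\cap C^1([0,T];H^{-1})$ and the initial conditions $w(0)=w_0$, $w_t(0)=w_1$, I would argue by density: approximate $(w_0,w_1,v)$ by smooth data for which (\ref{control1}) admits a classical solution (e.g.\ via Duhamel after lifting $v$ to a smooth function on $\Omega\times[0,T]$), verify (\ref{weak formulation}) and the continuity of $s\mapsto(w^n(s),w^n_t(s))$ by direct computation, and finally pass to the limit using the uniform estimate $\|w(s)\|_{L^2}+\|w_t(s)\|_{H^{-1}}\le C(\|w_0\|_{L^2}+\|w_1\|_{H^{-1}}+\|v\|_{L^2(\Sigma)})$ built into the Riesz step.

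The main obstacle, and the only genuinely non-trivial analytic ingredient, is the hidden-regularity statement: mild solutions of (\ref{omo}) of finite energy are not regular enough to possess a normal derivative as a Sobolev trace, yet $\partial_\nu u$ nevertheless belongs to $L^2(\partial\Omega\times(0,T))$ with a quantitative bound controlled by the energy. This is precisely what makes the boundary integral in $L_s$ meaningful and the duality argument go through, and it is the reason Remark \ref{hidden} is placed immediately before the statement of Theorem \ref{t:well posedness}; once hidden regularity is taken for granted, the rest of the proof is bookkeeping inside the transposition framework.
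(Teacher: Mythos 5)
Your proposal is correct: the paper itself gives no proof of Theorem \ref{t:well posedness}, deferring to the reference on Fourier series in control theory, and the argument given there is exactly the transposition (duality) scheme you outline --- hidden regularity makes the boundary term a bounded functional, time-reversibility lets you transfer it to the data $(u(s),u_t(s))$, and the dual identification $(H^1_0)^*\simeq H^{-1}$ (duality with $L^2$ as pivot, rather than Riesz in the strict sense) yields $(w(s),w_t(s))$, with continuity in time and the initial conditions recovered by density. So you have reconstructed essentially the same proof the paper points to, with no substantive gap.
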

For brevity, as we said above, here we confine ourselves to the study of the following equivalent property.
\begin{definition}[Exact observability]
\label{observability}
Let $u$ be the unique mild solution of (\ref{omo}) for the initial data $(u_0,u_1)\in H^1_0(\Omega)\times L^2(\Omega)$. Then the system is called exactly observable in time $T>0$ if there exists a constant $C>0$ such that
\begin{multline}
\label{observability inequality}
\int_{\Sigma} |\partial_{\nu} u|^2\,d\Gamma\otimes dt =\int_{0}^T \int_{\Gamma(t)} |\partial_{\nu} u|^2 \, d\Gamma\,dt \geq \\ 
C \left(
\parallel \nabla u_0 \parallel_{L^2(\Omega)}^2 + \parallel u_1 \parallel_{L^2(\Omega)}^2 \right)\,, 
\end{multline} 
for all $(u_0,u_1)\in H^1_0(\Omega)\times L^2(\Omega)$.
\end{definition}
We remind that $\partial_{\nu} u$ is a well defined $L^2(\partial \Omega \times (0,T))$ element (see Remark \ref{hidden}).\\
We are now in a position to prove an identity, which is the basic tool in proving the exact observability for some $\Sigma$'s.
\begin{lemma}[A multiplier identity]
\label{multiplier lemma}
Let $u$ be the mild solution corresponding to initial data $(u_0,u_1)\in H^1_0(\Omega)\times L^2(\Omega)$. Then for each $0\leq s<\tau$ and $\xi\in \mathbf{R}^d$, we have the following identity
\begin{multline}
\label{identity}
\frac{1}{2}\int_{s}^{\tau}\int_{\partial \Omega} (x-\xi)\cdot \nu \,|\partial_{\nu} u|^2 d \Gamma(x)\,dt =\\
\left[ \int_{\Omega} u_t \left( \nabla u \cdot (x-\xi) + \frac{d-1}{2} u \right)\,dx\right]_{s}^{\tau} +
 (\tau-s)E_0\,;
\end{multline}
where $E_0$ is the initial energy of the system as defined in (\ref{initial energy}) and $[f(t)]_{s'}^{s''}:=f(s'')-f(s')$.
\end{lemma}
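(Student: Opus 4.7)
The strategy I would pursue is the classical one of Lions--Komornik: prove the identity first for classical solutions, where all formal integrations by parts are justified, and then extend by density to mild solutions, leveraging the hidden regularity estimate quoted in Remark \ref{hidden}. So I would start from initial data in $(H^2(\Omega)\cap H^1_0(\Omega))\times H^1_0(\Omega)$ and use Proposition \ref{prop:well posedness omo} $ii)$ to obtain a classical solution $u$ with $u_{tt},\Delta u\in C(\mathbf{R}^+;L^2(\Omega))$ and a bona fide trace of $\nabla u$ on $\partial\Omega$.

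The bulk of the proof is then a direct calculation: multiply $u_{tt}-\Delta u=0$ by the multiplier $Mu := (x-\xi)\cdot\nabla u + \tfrac{d-1}{2}u$, integrate over $\Omega\times(s,\tau)$, and integrate by parts in $t$ and then in $x$. For the time derivative one produces the bracket term $\bigl[\int_\Omega u_t\, Mu\,dx\bigr]_s^\tau$ and an interior term $-\int_s^\tau\!\!\int_\Omega u_t\,\partial_t(Mu)\,dx\,dt$. For the Laplacian, Green's formula on $-\int_\Omega\Delta u\cdot Mu$ yields the boundary term $-\int_{\partial\Omega}Mu\,\partial_\nu u\,d\Gamma$ plus $\int_\Omega\nabla u\cdot\nabla(Mu)\,dx$. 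The key simplification is that $u=0$ on $\partial\Omega$ forces $\nabla u=(\partial_\nu u)\nu$ there, so $Mu|_{\partial\Omega}=(x-\xi)\cdot\nu\,\partial_\nu u$ and the boundary contribution collapses to $\int_{\partial\Omega}(x-\xi)\cdot\nu\,|\partial_\nu u|^2\,d\Gamma$, which is the left-hand side of (\ref{identity}). Pointwise computations then give $\nabla u\cdot\nabla((x-\xi)\cdot\nabla u)=|\nabla u|^2+\tfrac12(x-\xi)\cdot\nabla(|\nabla u|^2)$, and a further application of the divergence theorem brings in $\mathrm{div}(x-\xi)=d$. The role of the corrector $\tfrac{d-1}{2}u$ is precisely to make the remaining interior integrals combine symmetrically, reducing to $\int_s^\tau\!\!\int_\Omega(|\nabla u|^2+u_t^2)\,dx\,dt = 2\int_s^\tau E(t)\,dt$; Corollary \ref{energy cor} then replaces $E(t)$ by $E_0$, producing the term $(\tau-s)E_0$ in (\ref{identity}).

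To pass from classical to mild solutions, I would approximate $(u_0,u_1)\in H^1_0(\Omega)\times L^2(\Omega)$ by smooth data, e.g.\ finite eigenfunction truncations of (\ref{eq:solution}), whose associated solutions are classical. The right-hand side of (\ref{identity}) depends continuously on the initial data in $H^1_0\times L^2$ thanks to Proposition \ref{prop:well posedness omo} $i)$ and energy conservation, while continuity of the left-hand side is exactly the hidden regularity estimate invoked in Remark \ref{hidden}, i.e.\ the continuous extension $(u_0,u_1)\mapsto\partial_\nu u\in L^2(\partial\Omega\times(0,T))$.

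The principal obstacle, in my view, is the rigorous use of Green's formula when $\Omega$ is only assumed convex: the boundary is then merely Lipschitz, and the $H^2$ regularity required to give meaning to $D^2 u$ and to the trace of $\nabla u$ on $\partial\Omega$ must be imported from the classical regularity theorem of Grisvard for the Dirichlet Laplacian on convex domains. Once this point is settled the final density argument is routine.
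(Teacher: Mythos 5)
Your proposal is correct and takes essentially the same route as the paper: establish the identity for classical solutions arising from data in $(H^2(\Omega)\cap H^1_0(\Omega))\times H^1_0(\Omega)$ via Green's formulas and energy conservation, then pass to mild solutions by density, with the hidden regularity of Remark \ref{hidden} justifying convergence of the boundary term. The only differences are cosmetic or minor: you multiply once by the combined multiplier $(x-\xi)\cdot\nabla u+\tfrac{d-1}{2}u$, whereas the paper first uses $(x-\xi)\cdot\nabla u$ and then multiplies the equation by $u$ separately to absorb the leftover $\tfrac{d-1}{2}\int_s^\tau\!\int_\Omega(u_t^2-|\nabla u|^2)$ term, and in your sketch the remaining interior integrals actually combine to $\tfrac12\int_s^\tau\!\int_\Omega(u_t^2+|\nabla u|^2)\,dx\,dt=(\tau-s)E_0$ rather than $2\int_s^\tau E(t)\,dt$ as written (a dropped factor $\tfrac12$, with no structural consequence).
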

\begin{proof}
For reader's convenience, we divide the proof into three steps.\\
\textbf{Step 1} We first suppose that $u_0,u_1$ are smoother, i.e. $u_0\in H^2(\Omega)\cap H^1_0(\Omega)$ and $u_1\in H^1_0(\Omega)$. By Proposition \ref{prop:well posedness omo}, the solution to (\ref{omo}) is a classical solution, in particular the quantities $u_{x_k,x_j}$, for $1\leq k,j \leq d$, belong to $C(\mathbf{R}^+;L^2(\Omega))$; this will be used in the proof.\\
Under this assumption, the wave equation is solved by $u$ almost everywhere on $\Omega\times \mathbf{R}^+$, then we can multiply the wave equation by $(x-\xi)\cdot \nabla u$ and on integrating over $\Omega \times (s,t)$, we obtain
\begin{multline}
\label{eq:int null}
\int_{\Omega}\int_{s}^{\tau} ((x-\xi)\cdot \nabla u) \,u_{tt}\, dx\,dt\\
- \int_{\Omega}\int_{s}^{\tau} ((x-\xi)\cdot \nabla u) \,\Delta u\, dx\,dt
=0\,.
\end{multline}
For the first term on the LHS in (\ref{eq:int null}), with a simple integration by parts argument, we obtain
\begin{align*}
\int_{\Omega}\int_{s}^{\tau} &(x-\xi)\cdot \nabla u\, u_{tt}\,dx\\
&= \left[\int_{\Omega} (x-\xi)\cdot \nabla u\, u_t\,dx\right]_s^{\tau}
 -\int_{\Omega}\int_s^{\tau}(x-\xi)\cdot \frac{1}{2}\nabla ((u_t)^2) dx\,dt \\
&=\left[\int_{\Omega} (x-\xi)\cdot \nabla u\, u_t\,dx\right]_s^{\tau} + \frac{d}{2}\int_{\Omega}\int_s^{\tau} (u_t)^2 \, dx\,dt\,;
\end{align*}
where in the last inequality we have used the Green's formulas, $u_t=0$ on $\partial \Omega\times (s,\tau)$ and  $\nabla \cdot (x-\xi)=d$.\\
For the second term in (\ref{eq:int null}) we use a similar argument. Indeed, by Green's formulas
\begin{align*}
\int_{\Omega}\int_{s}^{\tau}& ((x-\xi)\cdot \nabla u)\cdot \Delta u\,dx\,dt\\
&= \int_{\partial \Omega} \int_s^{\tau} \partial_{\nu} u (x-\xi)\cdot \nabla u \,\,d\Gamma\,dt\\
&- \int_{\Omega} \int_{s}^{\tau} \nabla((x-\xi)\cdot \nabla u)\cdot \nabla u\,dx\,dt\,.
\end{align*}
Now, using that $((u_{x_j})^2)_{x_k}=2u_{x_j,x_k}$ and $\nabla (x_k-\xi_k)_{x_j}=\delta_{k,j}$ for all $1\leq k,j\leq d$ (here $\delta_{k,j}$ is the Kronecker's delta), one obtains
\begin{equation*}
\nabla((x-\xi)\cdot \nabla u)\cdot \nabla u = |\nabla u|^2 +\frac{1}{2} (x-\xi)\cdot \nabla (|\nabla u|^2)\,;
\end{equation*}
pointwise. With simple computations, we have
\begin{align*}
&\int_{\Omega}\int_{s}^{\tau} ((x-\xi)\cdot \nabla u)\cdot \Delta u\,dx\,dt=\\
&\frac{1}{2} \int_{\partial \Omega} \int_s^{\tau} (x-\xi)\cdot \nu |\partial_{\nu}u|^2 + \frac{d-2}{2} \int_{\Omega}\int_{s}^{\tau}  |\nabla u|^2\,dx\,dt \,.
\end{align*}
Putting all together and using that $E(t)\equiv E_0$ (cfr. Corollary \ref{energy cor}), we obtain
\begin{multline}
\label{eq:step 1}
\frac{1}{2} \int_{\partial \Omega} \int_s^{\tau} (x-\xi)\cdot \nu |\partial_{\nu}u|^2 =\left[\int_{\Omega} (x-\xi)\cdot \nabla u\, u_t\,dx\right]_s^{\tau} \\
+ (t-\tau) E_0 + \frac{d-1}{2} \int_{\Omega}\int_s^{\tau} |u_t|^2-|\nabla u|^2\,dx\,dt\,.
\end{multline}
\textbf{Step 2} In this step we rewrite the last term on LHS in (\ref{eq:step 1}). Indeed, under the assumption of \textbf{Step 1}, the wave equation is solved by $u$ a.e. on $\Omega \times (s,\tau)$, so that on multiplying it by $u$, integration over $\Omega \times (s,\tau)$ and using Green's formulas (recall that $u=0$ on $\partial \Omega\times (s,\tau)$)
\begin{multline}
\label{eq:step 2}
0=\int_{\Omega}\int_s^{\tau} u(u_{tt}-\Delta u)\,dx\,dt\\
= \left[ \int_{\Omega} u u_t \,dx\right]_s^{\tau} - \int_{\Omega}\int_s^{\tau}  |u_t|^2-|\nabla u|^2\,dx\,dt\,.
\end{multline}
\textbf{Step 3} Combining the equalities (\ref{eq:step 1})-(\ref{eq:step 2}) we obtain (\ref{identity}) for classical solutions.\\
In the general case, choose sequences such that $\{u_{0,k}\,:\,k\in \mathbf{N}\}\subset H^2(\Omega)\cap H^1_0(\Omega)$ and $\{u_{1,k}\,:\,k\in \mathbf{N}\}\subset H^1_0(\Omega)$, such that
\begin{equation*}
u_{0,k}\rightarrow u_0 \quad\text{in}\;\,H^1_0(\Omega)\,,\qquad
u_{1,k}\rightarrow u_1\quad\text{in}\;\,L^2(\Omega)\,.
\end{equation*}
Then passing to the limit in the identity (\ref{identity}) valid for the solution $u_k$ for initial data $(u_{0,k},u_{1,k})$, one obtain the claim.
\end{proof}

\begin{remark}
The proof of Corollary 3.1 is based on the multiplication of the wave equation against the function $m(x)\cdot \nabla u=(x-\xi)\cdot \nabla u$, which is called \textit{multiplier}, which justifies the name of the identity.
\end{remark}

\section{ALTERNATING OBSERVATION}
\label{s:alternating}
For alternating observation we mean that there exists a partition $0=:t_{-1}<t_{0}<t_{1}<\dots<t_{N-1}<t_{N}=:T$ of the interval $[0,T]$ such that
\begin{equation}
\label{Gamma alternating}
\Gamma(t)\equiv \Gamma_j\,, \qquad \forall t\in(t_{j-1},t_j)\;\;\; j=0\,\dots,N\,;
\end{equation}
where $\Gamma_j\subset \partial \Omega$ is a fixed subset for each $j=0\,\dots,N$.\\
Under the previous hypothesis, by (\ref{Sigma}) we have
\begin{equation}
\label{Sigma alternating}
\Sigma = \bigcup_{j=0}^N \Gamma_j \times (t_{j-1},t_j)\,,
\end{equation}
up to a set of measure $0$.\\
As pointed out in Section \ref{Introduction} the family $\{\Gamma_i\}_{i=0,\dots,N}$ cannot be chosen arbitrarly; we construct this family in a special form (see (\ref{Gamma_i}) below).\\
To do this, let $\{x_i\}_{i=0,\dots,N}$ be an arbitrary family of points in $\mathbf{R}^d$; 
\begin{itemize}
\item For each $i=0,\dots,N$ define 
\begin{equation}
\label{Ri}
R_i=\max\{|x-x_i|\,|\, x\in \overline{\Omega}\}\,,
\end{equation}
and for each $i=0,\dots,N-1$ define
\begin{equation}
\label{Ri+1,i}
R_{i+1,i}= |x_{i+1}-x_i|\,.
\end{equation}
\item For each $i=0,\dots,N$ define
\begin{equation}
\label{Gamma_i}
\Gamma_i = \{x\in \partial \Omega\,|\, (x-x_i)\cdot \nu >0\}\,.
\end{equation}
\end{itemize}
\begin{theorem}[Alternating observability]
\label{observability th}
In the previous notations, for each real number $T$ such that
\begin{equation}
\label{eq:T alternating}
T > R_N + \sum_{i=0}^{N-1} R_{i+1,i} + R_0\,,
\end{equation}
the system is exactly observable in time $T$, in the sense of Definition \ref{observability} for $\Sigma$ as in (\ref{Sigma alternating}).
\end{theorem}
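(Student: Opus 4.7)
The plan is to apply the multiplier identity (Lemma \ref{multiplier lemma}) on each subinterval $(t_{j-1},t_j)$ with the corresponding center $\xi=x_j$, and then sum over $j$. On the $j$-th piece, definition (\ref{Gamma_i}) makes $(x-x_j)\cdot\nu\,|\partial_\nu u|^2$ non-negative on $\Gamma_j$ and non-positive on $\partial\Omega\setminus\Gamma_j$, so discarding the second region only enlarges the left-hand side; estimating $(x-x_j)\cdot\nu\leq|x-x_j|\leq R_j$ pointwise on $\overline\Omega$ and introducing
\begin{equation*}
F_j(t):=\int_\Omega u_t(t)\Bigl[(x-x_j)\cdot\nabla u(t)+\tfrac{d-1}{2}u(t)\Bigr]\,dx\,,
\end{equation*}
one arrives at $\tfrac{R_j}{2}\int_{t_{j-1}}^{t_j}\!\int_{\Gamma_j}|\partial_\nu u|^2\,d\Gamma\,dt\geq F_j(t_j)-F_j(t_{j-1})+(t_j-t_{j-1})E_0$. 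Summing over $j$ and enlarging each $R_j$ to $R:=\max_j R_j$ converts the left-hand side into $\tfrac{R}{2}\int_\Sigma|\partial_\nu u|^2$ and produces $T E_0+\sum_{j=0}^{N}[F_j(t_j)-F_j(t_{j-1})]$ on the right.

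The next step is to rearrange the boundary sum. Grouping the two contributions arising at each interior time $t_j$ ($0\leq j\leq N-1$) yields
\begin{equation*}
\sum_{j=0}^N\bigl[F_j(t_j)-F_j(t_{j-1})\bigr]=F_N(T)-F_0(0)+\sum_{j=0}^{N-1}\bigl[F_j(t_j)-F_{j+1}(t_j)\bigr].
\end{equation*}
Since the $\tfrac{d-1}{2}u$ part of $F_j$ does not depend on $j$, every inner difference reduces to $(x_{j+1}-x_j)\cdot\int_\Omega u_t\,\nabla u\,dx$; by Cauchy--Schwarz, energy conservation (Corollary \ref{energy cor}) and the AM--GM estimate $\|u_t\|\,\|\nabla u\|\leq E_0$, it is bounded by $R_{j+1,j}\,E_0$ in absolute value.

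The step I expect to be the most delicate is estimating the two surviving endpoint terms by $R_N E_0$ and $R_0 E_0$ \emph{without} invoking a Poincar\'e constant, which would otherwise enlarge the critical time in (\ref{eq:T alternating}). The trick is the algebraic identity
\begin{equation*}
\bigl\|(x-x_j)\cdot\nabla u+\tfrac{d-1}{2}u\bigr\|_{L^2(\Omega)}^2=\int_\Omega[(x-x_j)\cdot\nabla u]^2\,dx-\tfrac{d^2-1}{4}\|u\|_{L^2(\Omega)}^2\,,
\end{equation*}
valid for $u\in H^1_0(\Omega)$; the cross-term is evaluated by integration by parts, the boundary contribution vanishing because $u=0$ on $\partial\Omega$ and $\nabla\cdot(x-x_j)=d$. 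Dropping the non-positive correction and estimating the first summand pointwise by $R_j^2|\nabla u|^2$ yields $|F_j(t)|\leq R_j E_0$, hence $|F_N(T)|\leq R_N E_0$ and $|F_0(0)|\leq R_0 E_0$. Collecting everything,
\begin{equation*}
\tfrac{R}{2}\int_\Sigma|\partial_\nu u|^2\,d\Gamma\,dt\;\geq\;\Bigl(T-R_N-R_0-\sum_{i=0}^{N-1}R_{i+1,i}\Bigr)E_0\,,
\end{equation*}
and hypothesis (\ref{eq:T alternating}) makes the bracket strictly positive, delivering the observability inequality with explicit constant $C=\tfrac{2}{R}\bigl(T-R_N-R_0-\sum_i R_{i+1,i}\bigr)$.
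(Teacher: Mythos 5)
Your proposal is correct and follows essentially the same route as the paper's proof: per-interval application of the multiplier identity with $\xi=x_j$, the telescoping rearrangement of the boundary terms, the bounds $|F_j(t)|\leq R_j E_0$ and $R_{j+1,j}E_0$ obtained via Cauchy--Schwarz, energy conservation and the integration-by-parts identity for the cross term (this is exactly the paper's Lemma \ref{estimate}), and the use of the sign of $(x-x_j)\cdot\nu$ on $\Gamma_j$ together with $(x-x_j)\cdot\nu\leq R_j$ to pass to the observed boundary portion. Nothing in your argument deviates in substance from the paper, and your explicit constant matches the one implicit in the paper's final estimate.
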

\begin{proof}
For each $i=0,\dots,N$, use the identity (\ref{identity}) for $\xi=x_i$, $s=t_{i-1}$ and $\tau =t_i$. On summing over $i=0,\dots,N$ such identities, we have
\begin{multline}
\label{step1}
\frac{1}{2}\sum_{i=0}^N\int_{t_{i-1}}^{t_{i}}\int_{\Gamma_i} (x-x_i)\cdot \nu \,|\partial_{\nu} u|^2 d \Gamma(x)\,dt =\\ \sum_{i=0}^N
\left[ \int_{\Omega} u_t \left( \nabla u \cdot (x-x_i) + \frac{d-1}{2} u \,dx\right)\right]_{t_{i-1}}^{t_i}
 + T E_0\,,
\end{multline}
since $\sum_{i=0}^N (t_i - t_{i-1})=T$, where we take as above $T=t_{N}$.\\
It will be useful to adopt the following notation
\begin{equation*}
u^s(x):=u(x,s)\,, \qquad u_t^s(x):=u_t(x,s)\,,
\end{equation*}
where $s\in[0,T]$ and $x\in \Omega$.
In the sum on the RHS of (\ref{step1}) some cancellations are possible:
\begin{multline}
\label{step2}
\sum_{i=0}^N
\left[ \int_{\Omega} u_t \left( \nabla u \cdot (x-x_i) + \frac{d-1}{2} u\right) \,dx\right]_{t_{i-1}}^{t_i} =\\
\int_{\Omega} u_t^T \left( \nabla u^T \cdot (x-x_N) + \frac{d-1}{2} u^T\right) \,dx\\
 - \sum_{i=0}^{N-1} \int_{\Omega} u_t^{t_i} \left( \nabla u^{t_i} \cdot [ (x-x_{i+1})-(x-x_{i})]  \right)\,dx \\
 -
\int_{\Omega} u_1\left( \nabla u_0\cdot (x-x_0) + \frac{d-1}{2} u_0\right) \,dx\,.
\end{multline}
For the first and last term on the RHS of (\ref{step2}) we have
\begin{equation}
\label{estimate1}
\Big| \int_{\Omega} u_t^T \left( \nabla u^T\cdot (x-x_N) + \frac{d-1}{2} u^T\right) \,dx\Big| 
\leq R_N E_0\,,
\end{equation}
\begin{equation}
\label{estimate2}
\Big|\int_{\Omega} u_1 \left( \nabla u_0 \cdot (x-x_0) + \frac{d-1}{2} u_0\right) \,dx\Big|
 \leq R_1 E_0\,.
\end{equation}
Moreover, for the terms in the sum on the RHS of (\ref{step2}), we have
\begin{equation}
\label{estimate3}
\Big| \int_{\Omega} u_t^{t_i}\left( \nabla u^{t_i} \cdot (x_i-x_{i+1})\right)\,dx\Big| \leq
 R_{i+1,i} E_0\,,
\end{equation}
for each $i=0,\dots,N$.\\
For convenience, we postpone the proof of the inequalities (\ref{estimate1})-(\ref{estimate3}), see Lemma \ref{estimate} below.\\
By definition of $\Gamma_i$ in (\ref{Gamma_i}) and $R_i$ in (\ref{Ri}), clearly we have
\begin{equation}
\label{estimate4}
R_i\int_{t_{i-1}}^{t_{i}}\int_{\Gamma_i} \,|\partial_{\nu} u|^2 d\Gamma\,dt \geq 
\int_{t_{i-1}}^{t_{i}}\int_{\partial \Omega} (x-x_i)\cdot \nu \,|\partial_{\nu} u|^2 d \Gamma\,dt\,,
\end{equation}
for each $i=0,\dots,N$.\\
Using the equation (\ref{step1}), the identity (\ref{step2}) and the estimates (\ref{estimate1})-(\ref{estimate4}), we have 
\begin{multline}
\label{step3}
\left( \max_{i=0,\dots,N} R_i\right)\sum_{i=0}^N \int_{t_{i-1}}^{t_{i}}\int_{\Gamma_i} \,|\partial_{\nu} u|^2 d \Gamma\,dt 
\geq \\
2(T - R_N - R_{N,N-1}- \dots - R_{1,0}-R_0)E_0\,.
\end{multline}
Now, by assumption $T> R_N + \sum_{i=1}^{N-1} R_{i+1,i} + R_1$, so there exists a constant $C=C_T>0$, which depends on $T$, such that
\begin{equation}
\label{step4}
\sum_{i=0}^N \int_{t_{i-1}}^{t_{i}}\int_{\Gamma_i} |\partial_{\nu} u|^2 d \Gamma\,dt \geq C_T E_0\,,
\end{equation}
which is exactly the observability inequality, as defined in Definition \ref{observability}.
\end{proof}
We now prove the estimates (\ref{estimate1})-(\ref{estimate3}); in particular (\ref{estimate1})-(\ref{estimate2}) and (\ref{estimate3}) follow respectively by $i)$ and $ii)$ of the following Lemma.
\begin{lemma}
\label{estimate}
For each mild solution $u\in C(\R^+;H^1_0(\Omega))\cap C^1(\R^+;L^2(\Omega))$ of (\ref{omo}), the following holds.
\begin{itemize}
\item[i)] For each $\xi\in \mathbf{R}^d$ and $s\in \mathbf{R}$, then
\begin{equation*}
\Big| \int_{\Omega} u_t^s \left( \nabla u^s\cdot (x-\xi) + \frac{d-1}{2} u^s\right) \,dx\Big| \\
\leq R_{\xi} E_0\,,
\end{equation*}
where $R_{\xi}=\max\{|x-\xi|\,:\,x\in \overline{\Omega}\}$.
\item[ii)] For each $\xi,\eta\in \mathbf{R}^d$ and $s\in \mathbf{R}^+$, then
\begin{equation*}
 \Big| \int_{\Omega}  u_t^s\left( \nabla u^s\cdot(\xi -\eta)\right)dx \Big|\leq R_{\xi,\eta}E_0\,,
\end{equation*}
where $R_{\xi,\eta}=|\xi- \eta|$.
\end{itemize}
\end{lemma}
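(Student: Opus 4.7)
The plan is to dispose of (ii) by elementary means and concentrate the real work on (i), where a subtle cancellation is hidden in the coefficient $\tfrac{d-1}{2}$.

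For (ii), the pointwise estimate $|(\xi-\eta)\cdot \nabla u^s|\leq R_{\xi,\eta}\,|\nabla u^s|$, together with Cauchy--Schwarz and the Young inequality $ab\leq \tfrac12(a^2+b^2)$, bounds the integral by $R_{\xi,\eta}\cdot \tfrac12(\|u_t^s\|_{L^2(\Omega)}^2+\|\nabla u^s\|_{L^2(\Omega)}^2)=R_{\xi,\eta}\,E(s)$. Energy conservation (Corollary \ref{energy cor}) then yields $E(s)=E_0$ and closes (ii).

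For (i), one cannot bound $(x-\xi)\cdot\nabla u^s$ and $\tfrac{d-1}{2} u^s$ separately against $E_0$, because $\|u^s\|_{L^2(\Omega)}$ is not controlled by the energy. The key observation is that since $u^s\in H^1_0(\Omega)$, a single integration by parts (the boundary contribution vanishes because $u^s=0$ on $\partial\Omega$, justified by density of $C^\infty_c(\Omega)$ in $H^1_0(\Omega)$) gives the identity
\begin{equation*}
\int_\Omega u^s\,(x-\xi)\cdot \nabla u^s\,dx=\tfrac{1}{2}\int_\Omega (x-\xi)\cdot \nabla\left((u^s)^2\right)\,dx=-\tfrac{d}{2}\int_\Omega (u^s)^2\,dx.
\end{equation*}
I would then expand the square $\|(x-\xi)\cdot \nabla u^s+\tfrac{d-1}{2} u^s\|_{L^2(\Omega)}^2$; the cross term contributes $-\tfrac{d(d-1)}{2}\|u^s\|_{L^2(\Omega)}^2$ and, combined with $(\tfrac{d-1}{2})^2\|u^s\|_{L^2(\Omega)}^2$, yields
\begin{equation*}
\|(x-\xi)\cdot \nabla u^s+\tfrac{d-1}{2} u^s\|_{L^2(\Omega)}^2=\|(x-\xi)\cdot \nabla u^s\|_{L^2(\Omega)}^2-\tfrac{d^2-1}{4}\|u^s\|_{L^2(\Omega)}^2,
\end{equation*}
whose last term is nonpositive for $d\geq 1$. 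Therefore $\|(x-\xi)\cdot\nabla u^s+\tfrac{d-1}{2} u^s\|_{L^2(\Omega)}\leq R_\xi\,\|\nabla u^s\|_{L^2(\Omega)}$, and a final Cauchy--Schwarz against $u_t^s$, Young's inequality, and $E(s)=E_0$ close the proof of (i).

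The main obstacle I expect is recognizing the special role of the coefficient $\tfrac{d-1}{2}$: it is precisely tuned so that the cross term in the square expansion absorbs $\|u^s\|_{L^2(\Omega)}^2$ with the favorable sign, making the whole expression controllable by $\|\nabla u^s\|_{L^2(\Omega)}^2$ alone. Once this algebraic cancellation is isolated, the remaining manipulations are entirely routine.
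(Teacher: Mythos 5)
Your proposal is correct and follows essentially the same route as the paper: Cauchy--Schwarz plus a weighted Young inequality, with the cross term $\int_\Omega u^s\,(x-\xi)\cdot\nabla u^s\,dx=-\tfrac{d}{2}\|u^s\|_{L^2(\Omega)}^2$ handled by integration by parts so that the $\|u^s\|_{L^2}^2$ contribution enters with a nonpositive coefficient, followed by energy conservation. Your coefficient $-\tfrac{d^2-1}{4}$ is in fact the correct value of that bracket (the paper's displayed simplification contains a harmless misprint), so nothing further is needed.
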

\begin{proof}
$i)$ By Chauchy-Schwarz inequality, we have
\begin{multline}
\label{l:parte i)}
\Big|\int_{\Omega} u_t^s \left( \nabla u^s\cdot (x-\xi) + \frac{d-1}{2} u^s\right) \,dx\Big|  \\
\leq\| u_t^s\|_{L^2} \|\nabla u^s\cdot (\cdot-\xi) + \frac{d-1}{2} u^s\|_{L^2}\\
\leq\frac{R_{\xi}}{2} \| u_t^s\|_{L^2}^2 + \frac{1}{2\,R_{\xi}}\|\nabla u^s\cdot (\cdot-\xi) + \frac{d-1}{2} u^s\|_{L^2}^2\,;
\end{multline}
here $L^2:=L^2(\Omega)$. Note that, 
\begin{align*}
\|&\nabla u^s\cdot (\cdot-\xi) + \frac{d-1}{2} u^s\|_{L^2}^2 \\
=&\|\nabla u^s\cdot (\cdot-\xi) \|^2_{L^2} + (d-1)(\nabla u^s\cdot (\cdot-\xi) ,u^s)_{L^2}\\
&+ \frac{(d-1)^2}{4}\|u^s\|_{L^2}^2\,.
\end{align*}
Since $u=0$ on $\partial \Omega$, then using Green's identity the middle term in the RHS of the previous equation is equal to
\begin{align*}
 \int_{\Omega}& (\nabla u^s\,\, u^s)\cdot(x-\xi)\,d\,x\\
&=\frac{1}{2}\int_{\Omega} (\nabla (u^s)^2)\cdot (x-\xi)\,\,d\,x\\
&=-\frac{1}{2}\int_{\Omega} (u^s)^2( \nabla \cdot(x-\xi))\,d\,x=-\frac{d}{2}\|u^s \|_{L^2}^2\,.
\end{align*}
This implies
\begin{align*}
\|\nabla u^s\cdot & (\cdot-\xi) + \frac{d-1}{2} u^s\|_{L^2}^2 = \\
&\|\nabla u^s\cdot (\cdot-\xi) \|^2_{L^2} \\
&+ \left[ -(d-1)\frac{d}{2} + \frac{(d-1)^2}{4}\right]\|u^s\|_{L^2}^2\\
&\leq \|\nabla u^s\cdot (\cdot-\xi) \|^2_{L^2}\leq R_{\xi}^2 \|\nabla u^s\|_{L^2}^2\,,
\end{align*}
since $$-\frac{d(d-1)}{2} + \frac{(d-1)^2}{4} = -\frac{d^2}{2}+\frac{1}{4}<0\,,$$
 for all $d\geq 1$.\\
Now returning to (\ref{l:parte i)}), we have
\begin{align*}
\Big|\int_{\Omega}& u_t^s \left( \nabla u^s\cdot (x-\xi) + \frac{d-1}{2} u^s\right) \,dx\Big| \\
&\leq\frac{R_{\xi}}{2} \| u_t^s\|_{L^2}^2 + \frac{1}{2\,R_{\xi}} R_{\xi}^2 \|\nabla u^s\|_{L^2}^2\\
&=\frac{R_{\xi}}{2} (\| u_t^s\|_{L^2}^2+\|\nabla u^s\|_{L^2}^2)=R_{\xi}E_0\,;
\end{align*}
where the last inequality follows by the energy conservation (cfr. Corollary \ref{energy cor}).\\
$ii)$ The proof the second part of the Lemma is easier. Indeed, by Cauchy-Schwarz inequality, we have
\begin{align*}
 \Big| \int_{\Omega}  u_t^s &\nabla u^s\cdot( \xi-\eta) \;dx \Big|\\
&\leq \parallel  u_t^s\parallel_{L^2} \parallel\nabla u^s\cdot(\xi-\eta) \parallel_{L^2}\\
&\leq R_{\xi,\eta} \parallel  u_t^s\parallel_{L^2} \parallel\nabla u^s \parallel_{L^2} \leq R_{\xi,\eta} E_0\,;
\end{align*}
where the last inequality follows another time by the energy conservation.
\end{proof}

\subsection{The role of $\{t_i\}_{i=-1,\dots,N}$.}
\label{theroleof}
One may wonder what the role is of the family $\{t_i\}_{i=-1,\dots,N}$, since in Theorem \ref{observability th} only the sum $T=\sum_{i=0}^{N}(t_{i}-t_{i-1})$ appears.\\
To explain the role of these values, we have to recall that if $N=0$ (i.e. fixed support control) then Theorem \ref{observability th} implies that the exact controllability holds for any $T> 2\max \{|x-x_0|\,|\,x\in \overline{\Omega}\}$, where $x_0\in \mathbf{R}^d$ is a fixed point. So if for an index $j\in\{0,\dots,N\}$ we have 
\begin{equation}
\label{role1}
|t_{j}-t_{{j}-1}| > 2 \max\{|x-x_{j}|\,|\,x\in \overline{\Omega}\}\,,
\end{equation}
then we can construct a control $v$ such that $supp\,\, v \subset \overline{\Gamma_j} \times [t_{j-1},t_j]$ by using the fixed support case of Theorem \ref{observability th}. Indeed, 
fix $(w_0,w_1)\in L^2(\Omega)\times H^{-1}(\Omega)$ and let be $w$ the unique weak solution of the problem (\ref{control1}) for $T=t_j$ and $v=0$; so by Theorem \ref{t:well posedness} we have that $(w(t_{j-1}),w_t(t_{j-1}))$ is well defined as an element of $L^2(\Omega)\times H^{-1}(\Omega)$. Since the inequality in (\ref{role1}) holds, the fixed case of Theorem \ref{observability th} provides a control $\tilde{v}$ such that the solution at time $t=t_j$ satisfies the null condition; so after defining $v=0$ for $t<t_{j-1}$ and $v=\tilde{v}$ for $t_{j-1}<t<t_j$ we have constructed a control for the initial condition $(w_0,w_1)$. So if the inequality (\ref{role1}) holds for an index $j$ the controllability results follows by classical results, (see for istance Theorem 6.1, Chapter 1 of \cite{Lions}). In the following section we produce an explicit example in which the inequality (\ref{role1}) is not satisfied by any $i=0,\dots,N$; so our investigation produces new results on controllability for the wave equation.\\
By the way, we point out that Theorem \ref{observability th} allows us to apply the Hilbert uniqueness method (or HUM, see \cite{Fourier}-\cite{Lions}); with some effort one can prove that the control provided by this method minimizes the \textit{energy} of the control over the possible controls (see Chapter 7 of \cite{Lions}). For this reason even in the case when (\ref{role1}) holds our Theorem yields the existence of a minimizer control; this remarkable property of the HUM control can be useful in applications. For brevity we do not reproduce the needed calculations.

\section{VARIABLE OBSERVATION}
In this section we recall a fairly general observability Theorem in which the subset $\Gamma(t)$ of observation at time $t$ can vary at each time $t\in(0,T)$.\\
As explained in Section \ref{Introduction}, the family $\{\Gamma(t)\}_{t\in(0,T)}$ cannot be arbitrary and will be constructed in a similar fashion to $\{\Gamma_i\}_{i=0,\dots,N}$ (see (\ref{Gamma_i}) in Section \ref{s:alternating}).\\
To do this, let $\varphi:[0,T]\rightarrow \mathbf{R}^d$ be a continuous and piecewise differentiable curve in $\mathbf{R}^d$ of finite length, i.e.
\begin{equation}
\label{length finite}
L(\varphi):=\int_0^T |\varphi'(t)|\,dt< +\infty\,.
\end{equation}
Furthermore, define
\begin{align}
\label{Gamma_t}
\Gamma_{\varphi}(t)&=\{x\in \partial \Omega\,|\,(x-\varphi(t))\cdot \nu >0\}\,,\\
\label{Sigma phi}
\Sigma_{\varphi} &= \bigcup_{t\in(0,T)} \Gamma_{\varphi}(t)\times \{t\}\,,\\
\label{ci}
c_i&=\max_{\overline{\Omega}}|x-\varphi(i)|\,, \qquad i=0,T\,.
\end{align}
Hereafter we assume that $\Sigma_{\varphi}$ is $\mathcal{H}^{d-1}\otimes \mathcal{L}^1$-measurable; for further discussion on this topic see \cite{Preprint}.\\
Also, let $P={0=t_{-1}<t_{0}<\dots t_{N-1}<t_N=T}$ be a partition of the interval $[0,T]$; define 
\begin{align}
\label{xphij}
x_{\varphi,j}&=\varphi(t_{j-1})\,,\\
\label{Gammaj}
\Gamma_{\varphi,j} &=\{x\in \partial \Omega\,|\,(x-x_{\varphi,j})\cdot \nu >0\}\,,
\end{align} 
for all $j=0,\dots,N$. For future convenience, we set 
\begin{equation}
\label{Sigmaphij}
\Sigma_{\varphi}^P:=\bigcup_{j=0}^{N} \Gamma_{\varphi,j} \times (t_{j-1},t_{j})\,.
\end{equation}
In next theorem, we have to consider a sequence of partitions $\{P_k\}_{k\in \mathbf{N}}$ of the interval $[0,T]$, so we will add the upper index $k$ in (\ref{xphij})-(\ref{Gammaj}) in order to keep trace of the dependence on $P_k$; and we will set $\Sigma_{\varphi}^k:= \Sigma_{\varphi}^{P_k}$.\\
Now we are ready to state the main result of this section.
\begin{theorem}[Variable Support Observability]
\label{variable corollary}
Under the above hypothesis, suppose there exists a sequence of partitions $\{P_k\}_{k\in\mathbf{N}}$ of the interval $[0,T]$, such that $\sup_{\{t_{j-1}^k,t_j^k\}\in P_k}|t_j^k-t_{j-1}^k| \searrow 0$ as $k\nearrow \infty$ and
\begin{equation}
\label{convergence}
\lim_{k\rightarrow\infty}\left(\mathcal{H}^{d-1}\otimes \mathcal{L}^1\right)\left(\Sigma_{\varphi} \Delta \Sigma_{\varphi}^k \right)= 0\,,
\end{equation}
where $P_k =\{0=t_{-1}^k<t_0^k<\dots<t_N^k=T\}$, $\mathcal{L}^1$ is the Lebesgue measure on $\mathbf{R}^1$ and $A\Delta B =(A\setminus B) \cup (B\setminus A)$.\\
Furthermore, suppose that $T$ verifies
\begin{equation}
\label{Tvariable}
T > c_0 + \int_{0}^T |\varphi'(t)|\,dt+c_N\,.
\end{equation}
Then the system is exactly observable in time $T$ (see Definition \ref{observability}) for $\Sigma =\Sigma_{\varphi}$.
\end{theorem}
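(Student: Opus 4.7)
The natural strategy is to view Theorem~\ref{variable corollary} as a limit of the alternating case: apply Theorem~\ref{observability th} along each partition $P_k$, and then let $k\to\infty$, using the symmetric-difference convergence (\ref{convergence}) to reduce the observation integral on $\Sigma_\varphi^k$ to the one on $\Sigma_\varphi$.

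\textbf{Applying the alternating theorem along $P_k$.} With the choice $x_{\varphi,j}^k=\varphi(t_{j-1}^k)$ one has
\begin{equation*}
R_0^k = c_0, \qquad R_N^k = \max_{\overline{\Omega}}|x-\varphi(t_{N-1}^k)|,
\end{equation*}
and $\sum_{i=0}^{N-1} R_{i+1,i}^k = \sum_{i=0}^{N-1} |\varphi(t_i^k)-\varphi(t_{i-1}^k)|$, i.e.\ the length of the polygon inscribed in $\varphi$ along $P_k$. Since $\varphi$ is continuous and $t_{N-1}^k\to T$ (as the mesh tends to $0$), $R_N^k\to c_N$; and since $\varphi$ is piecewise $C^1$ of finite length, the inscribed polygonal lengths converge to $L(\varphi)=\int_0^T|\varphi'(t)|\,dt$. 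Hence
\begin{equation*}
R_N^k+\sum_{i=0}^{N-1}R_{i+1,i}^k+R_0^k \xrightarrow[k\to\infty]{} c_N+L(\varphi)+c_0,
\end{equation*}
which is strictly less than $T$ by (\ref{Tvariable}). For all $k$ large enough the hypothesis (\ref{eq:T alternating}) of Theorem~\ref{observability th} is therefore met, yielding
\begin{equation*}
\int_{\Sigma_\varphi^k}|\partial_\nu u|^2\,d\Gamma\otimes dt \,\geq\, C_k\, E_0.
\end{equation*}
Tracking the constant $C_k$ produced in the proof of Theorem~\ref{observability th} (namely $C_k=2(T-R_N^k-\sum R_{i+1,i}^k-R_0^k)/\max_i R_i^k$), one sees that $C_k$ admits a uniform positive lower bound $C>0$, because the numerator tends to $2(T-c_0-L(\varphi)-c_N)>0$ while $\max_i R_i^k\leq \sup_{t\in[0,T]}\max_{\overline{\Omega}}|x-\varphi(t)|<\infty$.

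\textbf{Passing to the limit on the observation side.} By the hidden-regularity result recalled in Remark~\ref{hidden}, $|\partial_\nu u|^2\in L^1(\partial\Omega\times(0,T))$, so the set function $A\mapsto\int_A|\partial_\nu u|^2\,d\Gamma\otimes dt$ is absolutely continuous with respect to $\mathcal{H}^{d-1}\otimes\mathcal{L}^1$. The symmetric-difference hypothesis (\ref{convergence}) then gives
\begin{equation*}
\left|\int_{\Sigma_\varphi^k}|\partial_\nu u|^2\,d\Gamma\otimes dt - \int_{\Sigma_\varphi}|\partial_\nu u|^2\,d\Gamma\otimes dt\right|\leq \int_{\Sigma_\varphi\Delta\Sigma_\varphi^k}|\partial_\nu u|^2\,d\Gamma\otimes dt\xrightarrow[k\to\infty]{} 0.
\end{equation*}
Letting $k\to\infty$ in the alternating estimate of the previous step yields $\int_{\Sigma_\varphi}|\partial_\nu u|^2\,d\Gamma\otimes dt \geq C\,E_0$, which is exactly (\ref{observability inequality}) for $\Sigma=\Sigma_\varphi$.

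\textbf{Where the work lies.} The argument is essentially a limit procedure, so the two sensitive points are (a) the convergence of the inscribed polygonal lengths $\sum|\varphi(t_i^k)-\varphi(t_{i-1}^k)|$ to $L(\varphi)$, which is standard for rectifiable (in particular piecewise $C^1$) curves, and (b) the uniform positivity of the constants $C_k$, which is transparent from their explicit quantitative form. Everything else is a routine application of Theorem~\ref{observability th} together with absolute continuity of the Lebesgue integral against the $L^1$ density $|\partial_\nu u|^2$.
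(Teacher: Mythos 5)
Your argument is correct and is evidently the proof the theorem's hypotheses are designed for: the paper itself omits the proof (deferring to the preprint \cite{Preprint}), but applying Theorem \ref{observability th} along each partition $P_k$, noting $R_0^k=c_0$, $R_N^k\to c_N$, $\sum_i R_{i+1,i}^k\leq L(\varphi)$, and then using absolute continuity of $\int|\partial_\nu u|^2$ together with (\ref{convergence}) is exactly the limiting procedure encoded in the statement. The only remark worth adding is that you do not even need convergence of the inscribed polygonal lengths to $L(\varphi)$ — the one-sided bound $\sum_i|\varphi(t_i^k)-\varphi(t_{i-1}^k)|\leq L(\varphi)$ already suffices both to verify (\ref{eq:T alternating}) for large $k$ and to get the uniform lower bound on $C_k$.
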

For brevity we do not report the proof of Theorem \ref{variable corollary}; for the proof, other applications and further extension of this result we refer to \cite{Preprint}.

\section{APPLICATIONS}
In this section we give some applications of Theorems \ref{observability th} and \ref{variable corollary} in order to show the potentiality of these results.\\ 
To begin, we focus our attention to $\Omega=\B:=\{x\in \mathbf{R}^2\,|\,|x|<1\}$, i.e. the ball with center $0$ and radius $1$. As usual, we denote with $\S:=\partial \B$ the unit circle with center $0 $. 

\begin{corollary}[$1$-time alternating - Circle case]
\label{1circle cor}
Let $T>2(1+\sqrt{2})$ be a real number, define
\begin{align}
\label{d0}
d_0 &=\Big\{ z\in \S\,\Big|\, \frac{\pi}{2}< arg\,z <2\pi \Big\},\\
\label{d1}
d_1 &=\Big\{z\in\S \,\Big|\, 0< arg\,z <\frac{3\pi}{2} \Big\}\,.
\end{align}
Then the system (\ref{control1}) is exactly controllable for $\Omega=\B$, $t_0$ is an arbitrary element of $(0,T)$, and
\begin{equation*}
\Sigma = d_0 \times (0,t_0) \bigcup d_1 \times (t_0,T)\,.
\end{equation*}
\end{corollary}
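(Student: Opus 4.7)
The plan is a direct application of Theorem \ref{observability th} with $N = 1$, followed by the standard HUM duality that converts exact observability into exact controllability (this is the content of Definitions \ref{exact controllability def} and \ref{observability}, combined with Theorem \ref{t:well posedness}). The first step is to locate two points $x_0, x_1 \in \mathbf{R}^2$ such that the sets $\Gamma_0, \Gamma_1$ defined by (\ref{Gamma_i}) coincide with $d_0, d_1$ up to null sets. Since $\Omega = \B$ has outward normal $\nu(x) = x$ on $\S$, the defining condition $(x - x_i)\cdot\nu > 0$ collapses to the much simpler $x_i\cdot x < 1$.

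The geometric consequence is that the endpoints of the complementary arc $\Gamma_i^c \cap \S$ are the tangent points from $x_i$ to $\S$, i.e.\ the solutions of $x\cdot x_i = 1$. For $d_0$ the excluded arc runs from $(1,0)$ to $(0,1)$, which forces $x_0 = (1,1)$; analogously $x_1 = (1,-1)$ reproduces $d_1$. I would confirm the exact match by writing $x = (\cos\theta,\sin\theta)$ and noting that $x_0\cdot x = \sqrt{2}\sin(\theta + \pi/4)$ and $x_1\cdot x = \sqrt{2}\cos(\theta + \pi/4)$, so the inequalities $x_i\cdot x < 1$ carve out exactly the arcs $\theta\in(\pi/2, 2\pi)$ and $\theta\in(0, 3\pi/2)$, as required.

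With this choice the constants in (\ref{Ri})--(\ref{Ri+1,i}) are read off immediately: $R_0 = R_1 = |x_i| + 1 = 1+\sqrt{2}$ (the farthest point of $\overline{\B}$ from $x_i$ lies on the diameter through $x_i$), and $R_{1,0} = |x_1 - x_0| = 2$. Theorem \ref{observability th} then delivers the observability inequality for $\Sigma$ as soon as $T$ exceeds $R_1 + R_{1,0} + R_0 = 2(1+\sqrt{2}) + 2$, and HUM promotes this to the desired controllability. The main obstacle is the geometric step just described: one must verify not only that the two sets coincide, but also that the chosen $x_i$ are optimal among configurations guaranteeing $\Gamma_i \subset d_i$. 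This optimality follows from a short convex-analysis argument showing that the constraint $d_i^c \cap \S \subset \{x_i \cdot x \geq 1\}$ forces $|x_i| \geq \sqrt{2}$, with equality attained only at the corner points $(1,\pm 1)$. Everything else is routine bookkeeping in Theorem \ref{observability th}.
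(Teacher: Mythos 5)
Your route is the same as the paper's: apply Theorem \ref{observability th} with $N=1$ and $x_0=(1,1)$, $x_1=(1,-1)$, use $\nu(x)=x$ on the unit circle to check $\Gamma_0=d_0$ and $\Gamma_1=d_1$ via the condition $x\cdot x_i<1$, read off the constants from (\ref{Ri})--(\ref{Ri+1,i}), and conclude by observability--controllability duality. Your geometric verification is correct, and your value $R_{1,0}=|x_1-x_0|=2$ is the right one (the paper's proof states $R_{0,1}=2\sqrt{2}$, which is an arithmetical slip).

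The genuine problem is the mismatch between the time threshold you obtain and the one in the statement, which you never address. With $R_0=R_1=1+\sqrt{2}$ and $R_{1,0}=2$, condition (\ref{eq:T alternating}) reads $T>R_1+R_{1,0}+R_0=4+2\sqrt{2}$, whereas the corollary claims exact controllability for every $T>2(1+\sqrt{2})=2+2\sqrt{2}$; you even write the threshold as $2(1+\sqrt{2})+2$ without remarking that it is strictly larger than the one you are supposed to prove, so the range $2+2\sqrt{2}<T\le 4+2\sqrt{2}$ is left uncovered. The paper's own proof has the same defect in a worse form: it asserts that (\ref{eq:T alternating}) ``implies $T>2(1+\sqrt{2})$'', which is not what the theorem gives with these points, so the stated threshold is not actually justified there either. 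Moreover, your own ``optimality'' observation shows the issue cannot be fixed by a cleverer choice of points: $\Gamma_0\subseteq d_0$ forces both coordinates of $x_0$ to be at least $1$ (and similarly for $x_1$), hence $R_0,R_1\ge 1+\sqrt{2}$ and $R_{1,0}\ge 2$, so Theorem \ref{observability th} can never deliver a threshold below $4+2\sqrt{2}$ for this $\Sigma$. So that final paragraph, which you present as establishing optimality of the configuration (a claim the corollary neither makes nor needs), would be better used to flag explicitly that the corollary as stated follows from Theorem \ref{observability th} only after enlarging the threshold to $T>4+2\sqrt{2}$; as written, your proposal (like the paper's proof) proves a weaker statement than the one claimed.
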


\begin{proof}
It is an easy consequence of Theorem \ref{observability th} with the choice $x_0=(1,1)$ and $x_1=(1,-1)$. Indeed, by (\ref{Ri})-(\ref{Ri+1,i}) it is clear that
\begin{equation*}
R_0=R_1= \sqrt{2}+1\,, \qquad R_{0,1}=2\sqrt{2}\,.
\end{equation*}
Moreover, the condition (\ref{eq:T alternating}) in Theorem \ref{observability th}, implies $T>2(1+\sqrt{2})$. Lastly, with simple geometrical consideration, one can see $\Gamma_0=d_0$ and $\Gamma_1=d_1$ by (\ref{Gamma_i}). This concludes the proof.
\end{proof}

\begin{remark}
\label{1remark}
Note that in Corollary \ref{1circle cor} there is no assumption on the value $t_0\in(0,T)$; due to the discussion in subsection \ref{theroleof} it is clear that Corollary \ref{1circle cor} does not follow trivially by known results if
\begin{equation}
\label{condition t_0}
|t_0|< 2(1+\sqrt{2})\,.
\end{equation}
\end{remark}
We now extend Corollary \ref{1circle cor} to the $N$-times alternating case.

\begin{corollary}[$N$-times alternating - Circle case]
\label{Ncircle cor}
Let $T>2(N+1)+2\sqrt{2}$ be a real number and let $d_0,d_1$ be as in (\ref{d0})-(\ref{d1}). Then the system (\ref{control1}) is exactly controllable for $\Omega=\B$, $\{t_i\}_{i=0,\dots,N-1}$ any increasing finite subfamily of $(0,T)$, and
\begin{equation}
\label{Sigma:N alternating circle}
\Sigma =\left( \bigcup_{i=0,\,i\in 2\mathbf{N}}^N d_0 \times (t_{j-1},t_j)\right)\cup
\left( \bigcup_{i=0,\,i\in 2\mathbf{N}+1}^N d_1 \times (t_{j-1},t_j)\right)\,.
\end{equation}
\end{corollary}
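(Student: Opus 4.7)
The plan is to apply Theorem \ref{observability th} with an alternating choice of two reference points, exploiting the symmetry between the setups used in Corollary \ref{1circle cor}. Concretely, for $i = 0, 1, \dots, N$ I set
\[
x_i = \begin{cases} (1,1) & \text{if } i \text{ even,} \\ (1,-1) & \text{if } i \text{ odd.} \end{cases}
\]
With the partition $\{t_i\}$ given in the statement (extended by $t_{-1}=0$ and $t_N = T$), the sets $\Gamma_i$ produced by formula (\ref{Gamma_i}) alternate between $\Gamma_0 = d_0$ and $\Gamma_1 = d_1$, exactly by the geometric identification already carried out in the proof of Corollary \ref{1circle cor}. Hence the resulting $\Sigma$ from (\ref{Sigma alternating}) coincides with the one in (\ref{Sigma:N alternating circle}).

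Next I compute the constants appearing in Theorem \ref{observability th}. Since each $x_i$ lies outside $\overline{\B}$ at distance $|x_i| = \sqrt{2}$ from the origin, the farthest point of $\overline{\B}$ from $x_i$ is the antipodal boundary point, so
\[
R_i = |x_i| + 1 = \sqrt{2} + 1, \qquad i = 0,\dots,N.
\]
Consecutive reference points differ only in the sign of the second coordinate, so
\[
R_{i+1,i} = |x_{i+1} - x_i| = 2, \qquad i = 0,\dots,N-1.
\]
Substituting in (\ref{eq:T alternating}) gives the threshold
\[
R_N + \sum_{i=0}^{N-1} R_{i+1,i} + R_0 = (\sqrt{2}+1) + 2N + (\sqrt{2}+1) = 2(N+1) + 2\sqrt{2},
\]
which is exactly the hypothesis on $T$.

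Therefore Theorem \ref{observability th} yields the observability inequality for $\Sigma$ as in (\ref{Sigma:N alternating circle}), and by the equivalence between observability and controllability recalled in the Introduction, the system (\ref{control1}) is exactly controllable. There is no real obstacle here beyond the straightforward geometric verification that the alternating choice of $x_i$ reproduces the pair $(d_0, d_1)$ at every step; the only point worth checking carefully is that the bound is independent of the placement of the intermediate times $t_0, \dots, t_{N-1}$, which is immediate since Theorem \ref{observability th} depends only on the total length $T$ and not on the individual subinterval lengths.
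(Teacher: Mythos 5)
Your proof is correct and follows essentially the same route as the paper: apply Theorem \ref{observability th} with the alternating choice $x_i=(1,1)$ for $i$ even and $x_i=(1,-1)$ for $i$ odd, identify $\Gamma_i$ with $d_0$ or $d_1$ as in Corollary \ref{1circle cor}, and check condition (\ref{eq:T alternating}). In fact your computation $R_{i+1,i}=|(1,1)-(1,-1)|=2$ is the correct one: the paper's proof writes $R_{i,i+1}=2\sqrt{2}$, an arithmetic slip which would not match the stated threshold, whereas your value gives exactly $R_N+\sum_{i=0}^{N-1}R_{i+1,i}+R_0=2(N+1)+2\sqrt{2}$ as in the statement.
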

\begin{proof}
It is similar to the proof of Corollary \ref{1circle cor}. In this case, we have to choose $x_i\equiv x_0=(1,1)$ if $i$ is even or $x_i\equiv x_1=(1,-1)$. Similar to Corollary \ref{1circle cor}, we have
\begin{equation*}
R_0=R_1= \sqrt{2}+1\,, \qquad R_{i,i+1}=2\sqrt{2}\,,
\end{equation*}
for all $i=0,\dots,N-1$. As in the proof of Corollary \ref{1circle cor}, by (\ref{Gamma_i}) we have that $\Gamma_i\equiv d_0$ if $i$ is even, otherwise $\Gamma_i\equiv d_0$ and $\Sigma$ is as in (\ref{Sigma:N alternating circle}).
\end{proof}
\begin{remark}
\label{r:2}
As did in Remark \ref{1remark} for Corollary \ref{1circle cor}, we observe that in Corollary \ref{Ncircle cor} there is no assumption on the values $t_{i}$ for $i=0,\dots,N-1$. In this case we may require that
\begin{equation}
\label{condition t_j}
|t_j-t_{j-1}|< 2(1+\sqrt{2})\,, \qquad \forall j=0,\dots,N\,.
\end{equation}
\end{remark}
Below we give an interesting application of Theorem \ref{variable corollary}.
\begin{corollary}
\label{variable circle}
Let $\alpha\in \mathbf{R}^+$ be a positive real number, such that
\begin{equation}
\label{alpha bound}
\alpha < \frac{\pi}{4(1+\sqrt{2})+\pi \sqrt{2}}\,.
\end{equation}
Then the system (\ref{control1}) is exactly controllable in time $T=\pi/(2\alpha)$ with $\Omega=\B$ and $\Sigma=\cup_{t\in(0,T)} \Gamma(t)\times \{t\}$; where
\begin{equation}
\Gamma(t)= \Big\{ z\in \S\,\Big|\, \frac{\pi}{4}+\alpha\,t< arg\,z < \frac{7\pi}{4}+\alpha\,t\Big\}\,.
\end{equation}
\begin{proof}
Set $\varphi(t)=\sqrt{2}(\cos(\alpha t),\sin(\alpha t))$ for $t\in (0,\pi/(2\alpha))$.  Since it is easy to check that $\Gamma(t)=\Gamma_{\varphi}(t)$ (recall that $\Gamma_{\varphi}(t)$ is defined in (\ref{Gamma_t})), to prove the corollary we have only to check the hypothesis of Theorem \ref{variable corollary}.\\
Indeed, it is clear that the condition (\ref{convergence}) holds by taking a sequence of partition $P_k=\{t_j^k\}_{j=0,\dots,k}$ with $t_j^k=(j/k)T$ and $T:=\pi/(2\alpha)$.\\
By construction, the length of the curve is $L(\varphi)=\sqrt{2}(\pi/2)$, and $c_{i}=1+\sqrt{2}$ for $i=0,T$. So condition (\ref{Tvariable}) is satisfied if 
\begin{equation}
\frac{\pi}{2	\alpha} > 2(1+\sqrt{2})+\sqrt{2}\frac{\pi}{2}\,,
\end{equation}
which is equivalent to (\ref{alpha bound}). 
\end{proof}
\end{corollary}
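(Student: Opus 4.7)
The plan is to directly invoke Theorem \ref{variable corollary} with the explicit choice $\varphi(t)=\sqrt{2}(\cos(\alpha t),\sin(\alpha t))$ for $t\in[0,T]$ with $T=\pi/(2\alpha)$. This curve traces a quarter circle of radius $\sqrt{2}$ centred at the origin, and its angular velocity $\alpha$ is precisely tuned to the rotation present in the definition of $\Gamma(t)$.

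The first step will be to verify that $\Gamma(t)=\Gamma_\varphi(t)$ in the sense of (\ref{Gamma_t}). For a point $x=(\cos\theta,\sin\theta)\in\mathcal{S}^1$ the outward normal coincides with $x$ itself, so
\begin{equation*}
(x-\varphi(t))\cdot\nu = 1-\sqrt{2}\cos(\theta-\alpha t),
\end{equation*}
which is strictly positive precisely when $\cos(\theta-\alpha t)<1/\sqrt{2}$, i.e.\ when $\theta-\alpha t\in(\pi/4,7\pi/4)\pmod{2\pi}$; this matches the definition of $\Gamma(t)$ given in the statement.

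Next I would address the measurability/approximation hypothesis (\ref{convergence}) by taking the uniform partitions $P_k=\{t_j^k=(j/k)T\}_{j=0,\ldots,k}$. Since $\varphi$ is Lipschitz and the map $\xi\mapsto\Gamma_\xi:=\{x\in\mathcal{S}^1:(x-\xi)\cdot\nu>0\}$ depends continuously on $\xi$ in the Hausdorff sense (uniformly on compact sets), the symmetric differences $\Gamma_{\varphi(t_{j-1}^k)}\Delta\Gamma_\varphi(t)$ for $t\in(t_{j-1}^k,t_j^k)$ have $\mathcal{H}^{d-1}$-measure going to zero uniformly as the mesh shrinks; integrating in time gives $(\mathcal{H}^{d-1}\otimes\mathcal{L}^1)(\Sigma_\varphi\Delta\Sigma_\varphi^k)\to 0$. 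This is the only point that is not an immediate computation, but it is routine once one observes that the relevant set of ``degenerate'' points on $\mathcal{S}^1$ (those with $(x-\varphi(t))\cdot\nu=0$) forms a null set at each fixed $t$.

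Finally I would compute the quantities appearing in (\ref{Tvariable}). The length is $L(\varphi)=\int_0^T|\varphi'(t)|\,dt=\sqrt{2}\,\alpha\cdot T=\sqrt{2}\,\pi/2$. For the endpoint maxima, $\varphi(0)=(\sqrt{2},0)$ and $\varphi(T)=(0,\sqrt{2})$, so $c_0=c_N=1+\sqrt{2}$ since the farthest point of $\overline{\B}$ from either endpoint lies at distance $1+\sqrt{2}$. The condition (\ref{Tvariable}) then becomes
\begin{equation*}
\frac{\pi}{2\alpha}>2(1+\sqrt{2})+\sqrt{2}\,\frac{\pi}{2},
\end{equation*}
which rearranges to the assumption (\ref{alpha bound}) on $\alpha$. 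Applying Theorem \ref{variable corollary} yields exact observability, and by the standard HUM equivalence recalled in the introduction this gives exact controllability. The main obstacle is the verification of (\ref{convergence}); everything else is elementary geometry.
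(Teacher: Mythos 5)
Your proposal is correct and follows essentially the same route as the paper: choose $\varphi(t)=\sqrt{2}(\cos(\alpha t),\sin(\alpha t))$, identify $\Gamma(t)=\Gamma_{\varphi}(t)$, verify (\ref{convergence}) with the uniform partitions, and compute $L(\varphi)=\sqrt{2}\,\pi/2$ and $c_0=c_N=1+\sqrt{2}$ so that (\ref{Tvariable}) reduces to (\ref{alpha bound}). You even supply slightly more detail than the paper on the identity $(x-\varphi(t))\cdot\nu=1-\sqrt{2}\cos(\theta-\alpha t)$ and on why (\ref{convergence}) holds, which the paper leaves as "easy to check".
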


We now analyse a case of a convex domain; i.e. the interior of the hexagon with side $1$ and center $0$, it will be denoted by $\H\subset \R^2$ and we set $\E:=\partial \H$.\\
In this situation, we have analogous result to Corollaries \ref{1circle cor}, \ref{Ncircle cor} and \ref{variable circle}:
\begin{corollary}[$1$-time alternating - Hexagon case I]
\label{1hexagon cor}
Let $T>6$ be a real number, define
\begin{align}
\label{e0}
e_0 &=\Big\{ (x,y)\in \E\,\Big|\,x<\frac{1}{2}\Big\},\\
\label{e1}
e_1 &=\Big\{(x,y)\in\E \,\Big|\, x>-\frac{1}{2}\Big\}\,.
\end{align}
Then the system (\ref{control1}) is exactly controllable for $\Omega=\H$, $t_0$ is an arbitrary element of $(0,T)$, and
\begin{equation*}
\Sigma = e_0 \times (0,t_0) \bigcup e_1 \times (t_0,T)\,.
\end{equation*}
\end{corollary}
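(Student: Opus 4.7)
The plan is to apply Theorem \ref{observability th} with $N=1$ and a suitably chosen pair of points $x_0,x_1\in\mathbf{R}^2$, and then to pass from observability to controllability via the standard equivalence recalled around Definition \ref{observability}. Note that $\H$ is convex, so the assumption on the domain in Theorem \ref{observability th} is fulfilled.

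The natural candidates are the two vertices of $\H$ lying on the $x$-axis, namely $x_0=(1,0)$ and $x_1=(-1,0)$. The first task is to verify that, with this choice, the sets $\Gamma_0,\Gamma_1$ defined by (\ref{Gamma_i}) coincide (up to a $d\Gamma$-null set) with $e_0$ and $e_1$, respectively. I would do this edge by edge on the six sides of $\H$: the two edges adjacent to $x_0$ lie on lines through $x_0$, so $(x-x_0)\cdot\nu\equiv 0$ there and these edges are excluded from $\Gamma_0$ by strict inequality; on each of the remaining four edges, a direct computation using the explicit outward normal of the hexagon gives $(x-x_0)\cdot\nu>0$. These four edges are precisely the ones with $x<1/2$, hence form $e_0$. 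The identification $\Gamma_1=e_1$ then follows by the reflection $x\mapsto-x$.

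Next I would read off the constants in (\ref{Ri})--(\ref{Ri+1,i}). Since the diameter of $\overline{\H}$ equals $2$, realized between the two vertices on the $x$-axis, one has $R_0=R_1=2$; and clearly $R_{1,0}=|x_1-x_0|=2$. Condition (\ref{eq:T alternating}) then reads
$$T>R_1+R_{1,0}+R_0=2+2+2=6,$$
which is precisely the hypothesis of the corollary. Theorem \ref{observability th} therefore delivers the observability inequality on
$$\Sigma=\Gamma_0\times(0,t_0)\cup\Gamma_1\times(t_0,T)=e_0\times(0,t_0)\cup e_1\times(t_0,T),$$
and by the equivalence with exact controllability the claim follows.

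The only slightly delicate step is the edge-by-edge identification $\Gamma_i=e_i$. Placing $x_0$ exactly at the vertex $(1,0)$ is what balances the two competing requirements: moving $x_0$ further out along the positive $x$-axis would leave $\Gamma_0$ unchanged but would inflate $R_0$, worsening the bound on $T$, whereas moving $x_0$ inward would shrink $\Gamma_0$ strictly inside $e_0$. This is the geometric reason why the threshold $T>6$ is what comes out of the multiplier estimate with this choice of multipliers; apart from this elementary check I do not anticipate any genuine obstacle.
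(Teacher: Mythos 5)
Your proposal is correct and coincides with the paper's own argument: the same choice $x_0=(1,0)$, $x_1=(-1,0)$ in Theorem \ref{observability th}, the same constants $R_0=R_1=R_{0,1}=2$ giving the threshold $T>6$, and the identification $\Gamma_0=e_0$, $\Gamma_1=e_1$ (which you verify in more detail than the paper does).
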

\begin{proof}
As in the Proof of Corollary \ref{1circle cor}, we use Theorem \ref{observability th} with the choice $x_0=(1,0)$ and $x_1=(-1,0)$; it is also clear that $\Gamma_0=e_0$ and $\Gamma_1=e_1$ by definition (\ref{Gamma_i}). \\
Furthermore, by (\ref{Ri})-(\ref{Ri+1,i}), we have
\begin{equation*}
R_0=R_1= R_{0,1}=2\,;
\end{equation*}
then by (\ref{eq:T alternating}) the claim follows.
\end{proof}

\begin{corollary}[$N$-times alternating - Hexagon case]
\label{Nhexagon cor}
Let $T>2(2+N)$ be a real number and let $e_0,e_1$ be as in (\ref{e0})-(\ref{e1}). Then the system (\ref{control1}) is exactly controllable for $\Omega=\H$, $\{t_i\}_{i=0,\dots,N-1}$ any increasing finite subfamily of $(0,T)$, and
\begin{equation*}
\Sigma =\left( \bigcup_{i=0,\,i\in 2\mathbf{N}}^N e_0 \times (t_{j-1},t_j)\right)\cup
\left( \bigcup_{i=0,\,i\in 2\mathbf{N}+1}^N e_1 \times (t_{j-1},t_j)\right)\,.
\end{equation*}
\end{corollary}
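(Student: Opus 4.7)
The plan is to apply Theorem \ref{observability th} directly, in exactly the same manner as Corollary \ref{Ncircle cor} did for the circle. My choice of alternating centers would be $x_i=(1,0)$ when $i$ is even and $x_i=(-1,0)$ when $i$ is odd, matching the two vertices on the $x$-axis of $\H$ that were already used in the $N=1$ hexagon case (Corollary \ref{1hexagon cor}). Under this choice the family $\{x_i\}_{i=0,\dots,N}$ simply oscillates between those two points.

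First I would compute the geometric quantities $R_i$ and $R_{i+1,i}$ defined in (\ref{Ri}) and (\ref{Ri+1,i}). The hexagon $\H$ has diameter $2$, realised precisely between $(1,0)$ and $(-1,0)$, so $R_i = 2$ for all $i$ irrespective of parity, and $R_{i+1,i}=|x_{i+1}-x_i|=2$ for all $i=0,\dots,N-1$. The hypothesis (\ref{eq:T alternating}) of Theorem \ref{observability th} then reduces to
\[
T>R_N+\sum_{i=0}^{N-1}R_{i+1,i}+R_0 = 2+2N+2=2(N+2),
\]
which is exactly the assumption on $T$ in the statement.

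Second, I would verify that the sets $\Gamma_i$ produced by formula (\ref{Gamma_i}) coincide with $e_0$ when $i$ is even and with $e_1$ when $i$ is odd, so that $\Sigma$ takes the form (\ref{Sigma alternating}) asserted in the corollary. This is the one step that deserves attention, but it is the same elementary geometric check already performed in Corollary \ref{1hexagon cor}: on the two sides of $\H$ meeting at the vertex $(1,0)$, the vector $x-x_0$ is tangent to $\E$ and the dot product with the outward normal $\nu$ vanishes, while on each of the four remaining sides the condition $(x-x_0)\cdot\nu>0$ holds strictly. This yields $\Gamma_0=e_0$, and by reflection across the $y$-axis, $\Gamma_1=e_1$.

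With these ingredients in place, Theorem \ref{observability th} provides exact observability in time $T$ for the corresponding $\Sigma$, which in turn is equivalent to the claimed exact controllability via the standard duality recalled in the introduction. I do not expect any genuine obstacle: unlike the variable-support setting of Theorem \ref{variable corollary}, here there is no measurability issue to handle, and the fact that $R_i$ and $R_{i+1,i}$ are constant in $i$ is the mechanism that makes the bound on $T$ grow only linearly in $N$, mirroring the circle case in Corollary \ref{Ncircle cor}.
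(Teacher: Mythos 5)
Your proposal is correct and follows essentially the same route as the paper: alternate the multiplier centers between $x_0=(1,0)$ and $x_1=(-1,0)$, note $R_0=R_N=R_{i+1,i}=2$ so that condition (\ref{eq:T alternating}) becomes $T>2(2+N)$, and identify $\Gamma_i$ with $e_0$ or $e_1$ according to parity as in Corollary \ref{1hexagon cor}. The paper's proof is exactly this reduction to Theorem \ref{observability th}, stated more briefly.
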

\begin{proof}
Is similar to the proof of Corollaries \ref{Ncircle cor} and \ref{1hexagon cor}.\\
In this case, we have to choose $x_i\equiv x_0=(1,0)$ if $i$ is even or $x_i\equiv x_1=(-1,0)$ otherwise. Moreover, by (\ref{Ri})-(\ref{Ri+1,i}) we obtain $$R_0=R_N=R_{i+1,i}=2\,.$$
Then condition (\ref{eq:T alternating}) is equivalent to $T>4+2N=2(2+N)$ and the claim follows.
\end{proof}

The last application consists in the following:
\begin{corollary}[$1$-time alternating - Hexagon case II]
\label{1hexagon corII}
Let $T>5 \sqrt{3}$ be a real number, define
\begin{align*}
e'_0 &=\Big\{ (x,y)\in \E\,\Big|\,y<\frac{\sqrt{3}}{2}x\Big\},\\
e'_1 &=\Big\{(x,y)\in\E \,\Big|\, y>\frac{\sqrt{3}}{2}x\Big\}\,.
\end{align*}
Then the system (\ref{control1}) is exactly controllable for $\Omega=\H$, $t_0$ is an arbitrary element of $(0,T)$, and
\begin{equation*}
\Sigma = e'_0 \times (0,t_0) \bigcup e'_1 \times (t_0,T)\,.
\end{equation*}
\end{corollary}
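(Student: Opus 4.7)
The plan is to derive Corollary \ref{1hexagon corII} directly from Theorem \ref{observability th} in the $1$-time alternating case $N=1$, mimicking the strategy used in the proofs of Corollaries \ref{1circle cor} and \ref{1hexagon cor}. My aim is to exhibit a pair of base points $x_0,x_1\in \R^2$, symmetric about the origin ($x_1=-x_0$), such that (a) the associated sets $\Gamma_0,\Gamma_1$ defined by (\ref{Gamma_i}) satisfy $\Gamma_0\subset e'_0$ and $\Gamma_1\subset e'_1$, and (b) the quantity $R_0+R_{0,1}+R_1$ from (\ref{Ri})--(\ref{Ri+1,i}) equals $5\sqrt{3}$. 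Once these are in place, Theorem \ref{observability th} applied with the partition $\{0,t_0,T\}$ yields the observability inequality (\ref{observability inequality}) for $\Sigma':=\Gamma_0\times(0,t_0)\cup\Gamma_1\times(t_0,T)$ whenever $T>5\sqrt{3}$; since $\Sigma'\subset\Sigma$, we have $\int_{\Sigma}|\partial_\nu u|^2\,d\Gamma\,dt\geq \int_{\Sigma'}|\partial_\nu u|^2\,d\Gamma\,dt$, the inequality transfers to $\Sigma$ with an equal or better constant, and exact controllability follows from the equivalence between controllability and observability pointed out in the Introduction.

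The key to step (a) is the polygonal structure of $\H$: on each of its six edges the outward normal $\nu$ is constant and $x\cdot\nu$ equals the apothem $\sqrt{3}/2$, hence $(x-x_i)\cdot\nu$ is constant along each edge. Consequently $\Gamma_i$ is automatically a disjoint union of whole open edges of $\H$, and the inclusion $\Gamma_i\subset e'_i$ reduces to a sign check on each of the six edges. I would take $x_0$ (and $x_1=-x_0$) along the direction perpendicular to the separating line $\{y=(\sqrt{3}/2)x\}$ through the origin, at the smallest distance from $\H$ such that every edge of $\H$ not entirely contained in $e'_0$ is excluded from $\Gamma_0$; the corresponding inclusion $\Gamma_1\subset e'_1$ is then automatic by the central symmetry of $\H$ and of the separating line.

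Step (b) is then elementary: by the central symmetry of $\H$ we have $R_1=R_0=\max_{x\in\overline{\H}}|x-x_0|$ (the distance from $x_0$ to the farthest vertex of $\H$), and $R_{0,1}=2|x_0|$, so the hypothesis (\ref{eq:T alternating}) becomes $T>2R_0+2|x_0|$; the minimal admissible $|x_0|$ from step (a) yields $2R_0+2|x_0|=5\sqrt{3}$ and produces the stated threshold. The main obstacle is precisely this geometric optimization: unlike in Corollary \ref{1hexagon cor}, where the separating line is aligned with the hexagonal geometry, the line $\{y=(\sqrt{3}/2)x\}$ is neither a long diagonal of $\H$ nor parallel to any edge, so the constraint $\Gamma_0\subset e'_0$ forces $x_0$ noticeably away from $\H$, inflating both $R_0$ and $R_{0,1}$; tracking this balance carefully and verifying that the minimum of $2R_0+2|x_0|$ is exactly $5\sqrt{3}$ is the computational heart of the proof, and explains the increase with respect to the threshold $T>6$ of Corollary \ref{1hexagon cor}.
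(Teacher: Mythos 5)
Your overall strategy is the paper's: apply Theorem \ref{observability th} with $N=1$ and two centrally symmetric points $x_1=-x_0$, read off $R_0=R_1$, $R_{0,1}=2|x_0|$, and get the threshold from (\ref{eq:T alternating}). Your observation that $\nu$ is constant on each edge of $\E$ with $x\cdot\nu=\sqrt{3}/2$ there, so that each $\Gamma_i$ is a union of whole open edges, is also correct. But precisely this observation kills your step (a). With the orientation of $\H$ forced by Corollary \ref{1hexagon cor} (a vertex at $(-1,0)$, so that $R_0=2$ there), the line $y=(\sqrt{3}/2)x$ passes through no vertex of $\H$: it crosses the \emph{interiors} of the two opposite edges with outward normals $\pm(\sqrt{3}/2,1/2)$, at $(2/3,\sqrt{3}/3)$ and $(-2/3,-\sqrt{3}/3)$. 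Hence neither of these two edges is contained in $e'_0$ (nor in $e'_1$), so the inclusion $\Gamma_0\subset e'_0$ forces both to be excluded from $\Gamma_0$, i.e. $x_0\cdot(\sqrt{3}/2,1/2)\geq \sqrt{3}/2$ and $-\,x_0\cdot(\sqrt{3}/2,1/2)\geq \sqrt{3}/2$ simultaneously, which is impossible for every $x_0\in\R^2$ — whether or not it lies on the perpendicular to the separating line. The same obstruction rules out $\Gamma_1\subset e'_1$. So the feasible set of your optimization is empty, and the assertion that its minimum equals $5\sqrt{3}$ — which you explicitly defer as ``the computational heart'' — is never established; as written the proposal cannot be completed.

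For comparison, the paper does not optimize at all: it simply exhibits $x_0=(3/2,\sqrt{3}/2)$, $x_1=-x_0$ (along the $30^\circ$ apothem direction, not perpendicular to the printed line), computes $R_0=R_1=3\sqrt{3}/2$, $R_{0,1}=2\sqrt{3}$, obtains $T>5\sqrt{3}$ from (\ref{eq:T alternating}), and asserts $\Gamma_0=e'_0$, $\Gamma_1=e'_1$. For that choice $\Gamma_0$ is the union of the three edges joining $(-1/2,\sqrt{3}/2)$ to $(1/2,-\sqrt{3}/2)$ through $(-1,0)$, i.e. the half of $\E$ cut off by the line $y=-\sqrt{3}\,x$ through two \emph{opposite vertices}; the printed slope $\sqrt{3}/2$ in the definition of $e'_0,e'_1$ is incompatible with any such identification (or inclusion), so the statement must be read with a vertex-to-vertex separating line. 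Any correct proof therefore has to do what the paper does — name explicit points adapted to such a line and carry out the arithmetic for $R_0$, $R_1$, $R_{0,1}$ — rather than optimize against $e'_0$, $e'_1$ as literally written; your inclusion-plus-monotonicity reduction of the observability integral is fine in principle, but it does not rescue the construction.
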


\begin{proof}
As in the Proof of Corollary \ref{1hexagon cor}, we use Theorem \ref{observability th} with the choice
\begin{equation*}
x_0= \left( \frac{3}{2},\frac{\sqrt{3}}{2}\right)\,, \qquad x_1=\left( -\frac{3}{2},-\frac{\sqrt{3}}{2} \right)\,.
\end{equation*}
By (\ref{Ri})-(\ref{Ri+1,i}), we have
\begin{equation*}
R_0=R_1=\frac{3\sqrt{3}}{2}\,, \qquad R_{0,1}=2\sqrt{3}\,.
\end{equation*}
In this case, the condition (\ref{eq:T alternating}) is $T>2(3\sqrt{3})/2+2\sqrt{3}=5\sqrt{3}$ as stated in the Corollary.\\
Furthermore, with this choice of $x_0,x_1$, by (\ref{Gamma_i}) one can prove that $\Gamma_0=e'_0$ and $\Gamma_1=e'_1$; then the claim follows.
\end{proof}

The analogous of Corollary \ref{Nhexagon cor} for the choice of $x_0,x_1$ made in Corollary \ref{1hexagon corII} can be easily proven by the same argumentations (in this case $T>\sqrt{3}(3 + 2N)$). In order to avoid repetition we do not record the proof here; the same considerations are valid for the content of Remarks \ref{1remark} and \ref{r:2}.\\
\\
We point out that Theorems \ref{observability th} and \ref{variable corollary} can be applied to a very wide range of domains in $\mathbf{R}^d$ for any $d>1$; we have chosen only two cases in this section in order to give a flavour of the possible applications.\\
Theorem \ref{variable corollary} needs the additional condition (\ref{convergence}), which is not in general immediate to prove, though it can be proven to be valid for smooth $\Omega$ and $\varphi$; this can be useful in applications.

\section{CONCLUSIONS}
\label{s:conclusion}
To conclude we have shown that the exact controllability holds if $T$ is sufficiently large and the subset of observation $\Sigma$ is suitable.\\
More specifically:
\begin{itemize}
\item Let $\{x_i\}_{i=0\,\dots,N}$ be an arbitrary family of points in $\mathbf{R}^d$, $\{t_i\}_{i=-1,\dots,N}$ an increasing family of positive numbers such that $t_{-1}=0$ and $T:=t_{N}$; then the system (\ref{control1}) is exactly controllable if
\begin{equation*}
T> R_0 + \sum_{i=0}^{N-1} R_{i+1,i} + R_{N}\,;
\end{equation*}
where $R_i,R_{i+1,i}$ are defined in (\ref{Ri})-(\ref{Ri+1,i}) and
\begin{align*}
\Sigma &= \bigcup_{i=0}^N \Gamma_{i} \times (t_i,t_{i-1})\,,\\
\Gamma_i :&= \{x\in \partial \Omega \,|\, (x-x_i)\cdot \nu >0\}\,.
\end{align*}
See Theorem \ref{observability th}.
\item Let $\varphi:[0,T]\rightarrow \mathbf{R}^d$ be a continuous and piecewise differentiable curve of finite length (see (\ref{length finite})). If
\begin{align*}
T&> c_0 + \int_0^T |\varphi'(t)|\,dt + c_N\,;\\
c_i :&= \max_{x\in  \overline{\Omega}} |x-\varphi(i)|\,, \qquad i=0,T\,,
\end{align*}
the system (\ref{control1}) is exactly controllable with
\begin{align*}
\Sigma &= \bigcup_{t\in(0,T)} \Gamma_{\varphi}(t) \times\{t\}\,,\\
\Gamma_{\varphi}(t) :&= \{x\in \partial \Omega \,|\, (x-\varphi(t))\cdot \nu >0\}\,.
\end{align*}
See Theorem \ref{variable corollary}.
\end{itemize}
These results can be helpful in engineering applications in which one has to control the evolution of a structure (whose dynamics is governed by the wave equation) by means of an action on a portion of the boundary (see \cite{Carcaterra2}), but for structural reasons, one cannot act for a long time on the same portion of the boundary,  so that the \textit{switch} of the control is necessary.

\end{document}